\theoremstyle{plain}
\newtheorem{theorem}{Theorem}[section]
\newtheorem{lem}[theorem]{Lemma}
\theoremstyle{definition}
\theoremstyle{remark}
\definecolor{dark-red}{rgb}{0.4,0.15,0.15}
\definecolor{dark-blue}{rgb}{0.15,0.15,0.4}
\definecolor{medium-blue}{rgb}{0,0,0.5}
\newcommand{\norm}[1]{\left\lVert#1\right\rVert}
\DeclareMathOperator*{\re}{Re}
\DeclareMathOperator*{\trace}{Tr}
\DeclareMathOperator*{\rank}{rank}
\begin{document} 
\title{Scalable Incremental Nonconvex Optimization Approach for Phase Retrieval}
\author[*]{Ji Li\thanks{email: keelee@csrc.ac.cn}}
\author[**]{Jian-Feng Cai\thanks{email: jfcai@ust.hk}}
\author[***]{Hongkai Zhao\thanks{email: zhao@math.uci.edu}}
\affil[*]{Beijing Computational Science Research Center, Beijing, China}
\affil[**]{Department of Mathematics, Hong Kong University of Science
and Technology, Clear Water Bay, Kowloon, Hong Kong}
\affil[***]{Department of Mathematics, University of California, Irvine, CA, USA}
\date{\today}
\maketitle
\begin{abstract}
We aim to find a solution $\bm{x}\in\mathbb{C}^n$ to a system of quadratic equations of the form
$b_i=\lvert\bm{a}_i^*\bm{x}\rvert^2$,
$i=1,2,\ldots,m$, e.g., the well-known NP-hard phase retrieval problem. As opposed to recently proposed state-of-the-art nonconvex methods, we revert to the semidefinite relaxation (SDR) PhaseLift convex formulation and propose a successive and incremental nonconvex optimization algorithm, termed as \texttt{IncrePR}, to indirectly minimize the resulting convex problem on the cone of positive semidefinite matrices. Our proposed method overcomes the excessive computational cost of typical SDP solvers as well as  the  need of a good initialization for typical nonconvex methods. For Gaussian measurements, which is usually needed for provable convergence of nonconvex methods, \texttt{IncrePR} with restart strategy outperforms state-of-the-art nonconvex solvers with a sharper phase transition of perfect recovery and typical convex solvers in terms of computational cost and storage. For more challenging structured (non-Gaussian) measurements often occurred in real applications, such as transmission matrix and oversampling Fourier transform, \texttt{IncrePR} with several restarts can be used to find a good initial guess. With further refinement by local nonconvex solvers, one can achieve a better solution than that obtained by applying nonconvex solvers directly when the number of measurements is relatively small. Extensive numerical tests are performed to demonstrate the effectiveness of the proposed method.
\end{abstract}
 
\section{Introduction}
\label{sec:introduction}
\subsection{The problem}

Consider the following $m$ quadratic equations for $\bm{x}\in\mathbb{C}^n$
\begin{equation}
  \label{eq:1}
  b_i=\lvert \bm{a}_i^*\bm{x}\rvert^2, \quad 1\leq i\leq m,
\end{equation}
where the measurements $\bm{b}=[b_1,\ldots,b_m]^T$ and
the design/sampling vectors $\{\bm{a}_i\}_{i=1}^m\in\mathbb{C}^n$ are known.  Having information
about $\lvert \bm{a}_i^*\bm{x}\rvert$ means that the
signs or phases of $\bm{a}_i^*\bm{x}$ are
missing. In general, problem~\eqref{eq:1} constitutes an instance of NP-hard nonconvex quadratic programming~\cite{Chen2015b}. On the other hand, if the missing phases of the
measurements $\{b_i\}_{i=1}^m$ are recovered, the true solution, denoted by $\bm{x}^{\natural}$ in this paper, can be
reconstructed by solving a system of linear equations. Problem~\eqref{eq:1} is referred to the well-known (Fourier) phase retrieval problem when $\bm{a}_i$'s are 
Fourier vectors. The phase retrieval problem is of paramount
importance in many fields of physical sciences and engineering, e.g., X-ray crystallography~\cite{Millane1990},
electron microscopy~\cite{Misell1973}, X-ray diffraction
imaging~\cite{Shechtman2015}, optics~\cite{Kuznetsova1988} and
astronomy~\cite{Fienup1982} etc. In these applications,
often one can only record intensity of the Fourier transform of a complex
signal due to physical
limitations of detectors, such as charge-coupled device (CCD)
cameras. 

In spite of its simple form and practical importance across
various fields, solving problem~\eqref{eq:1} is challenging. 
Even checking the feasibility is an NP-hard problem. Much effort has recently been devoted to determining the number of such equations $m$ necessary and/or sufficient for the uniqueness of the solution up to a global phase constant. It has been shown that, for the real case, $m=2n-1$ measurements with generic\footnote{For the definition of generic vectors, see the reference~\cite{Balan2015,Balan2006}.} vectors $\bm{a}_i$'s are sufficient and necessary for the uniqueness. For the complex case, the necessary condition for uniqueness requires $m\geq 4n-4$ measurements~\cite{Balan2015,Balan2006}. Assuming that the system~\eqref{eq:1} admits an unique
solution $\bm{x}^{\natural}$ up to a phase constant, one needs an efficient and robust numerical algorithm to compute it.  However, current state-of-the-art methods are either not well scaled with respect to the dimension of the signal, $n$, in terms of computational complexity or can only guarantee local convergence when the number of measurements $m$ is small. Our objective is to develop a conceptually simple while numerically efficient and stable algorithm to 
find the solution. 

\subsection{Prior work}
\label{sec:relatived-work}

To develop provable algorithms and facilitate the analysis of phase retrieval,
most recent works assume Gaussian random measurements~\cite{Candes2015,Chen2015b,Candes2012,Wang16,Netrapalli2013},  which are not very realistic in most applications. The measurement setup is said to be Gaussian model if the sampling vector
$\bm{a}_i\sim\mathcal{N}(0,I)$ and
$\bm{a}_i\sim\mathcal{CN}(0,I)=\mathcal{N}(0,I/2)+i\mathcal{N}(0,I/2)$ for the real and complex cases respectively. The randomness assumption is the key ingredient for the success of those proposed algorithms when the number of measurements is large enough. The only ambiguity of a solution to~\eqref{eq:1} for Gaussian model is a global phase shift, i.e., $e^{i\theta}\bm{x}^{\natural}$ for some $\theta$. For the classical oversampling Fourier phase retrieval, the ambiguity of solution becomes worse, which includes phase shift, translation shift and mirror image~\cite{Hayes1982}. To mitigate the more severe ambiguity issue, redundant measurements are introduced, such as measurements with
masks~\cite{Fannjiang2012a,Candes2013}, short-time Fourier
transform~\cite{Jaganathan2015,Qian2014}, and ptychography~\cite{Qian2014}. In
practice, it is typically assumed that problem~\eqref{eq:1} admits an unique solution up to a global phase constant as long as the number of measurements is large enough. For most practical measurements, such as transmission matrix in the phase package~\cite{Chandra2017PhasePack} and Fourier transform, both the theory and those algorithms based on Gaussian model do not work as well as for Gaussian model. 

Much effort has recently been devoted to devising provable phase retrieval solvers for Gaussian model~\cite{Candes2015,Candes2012,Wang16,Wang2017Solving,Wang2017SPARTA}. These algorithms fall into two realms: convex and nonconvex ones. Convex
approaches rely on the so-called lifting technique to turn the
quadratic nonlinear constraints of $\bm{x}$ into linear constraints of the
matrix $\bm{X}=\bm{x}\bm{x}^*$. Trace regularized semidefinite relaxation (SDR) formulation, termed as PhaseLift, can be designed~\cite{Candes2012}, where low rank promoting trace regularization replaces the rank one constraint. Unlike nonconvex ones, the solution to SDR formulation is independent of initialization. However, such algorithms typically involve storing an $n\times n$ matrix variable and solving an SDP (semidefinite programming) problem, which suffer from prohibitive computation cost and storage for large scale problems. Another variant, PhaseCut, which is based on a different SDR reformulation, has the same scaling problem~\cite{Fogel2013}. We say that the SDR for phase retrieval~\eqref{eq:1} is \emph{tight}, if the optimal solution $\bm{X}^{\natural}$ of SDR is unique and of rank one, i.e., $\bm{X}^{\natural}=\bm{x}^{\natural}{\bm{x}^{\natural}}^*$. Then $\bm{x}^{\natural}$ is the solution to~\eqref{eq:1} up to a phase shift. The tightness of PhaseLift with trace minimization for Gaussian model requires the number of measurements $m$ on the order of $n$~\cite{Candes2013b}. Another recent convex relaxation model~\cite{Dhifallah2017Phase,Demanet2013} reformulated~\eqref{eq:1} as a linear programming problem in the natural parameter vector domain $\mathbb{C}^n$. However, its performance depends on a good anchor vector, which has to form a small enough angle with the true solution and serves as a similar role as a good initial guess for nonconvex approaches to~\eqref{eq:1}.

On the other hand, nonconvex approaches directly deal with~\eqref{eq:1} in the natural parameter space $\mathbb{C}^n$. Based upon different formulations, recently proposed nonconvex approaches include:
alternating projection methods, such as AltMinPhase~\cite{Netrapalli2013} and Kaczmarz variant~\cite{Wei2015}; gradient flow based methods, such as
Wirtinger flow (WF) and its truncated version
(TWF)~\cite{Candes2015,Chen2015b}, truncated amplitude flow (TAF)~\cite{Wang16} and reweighted amplitude flow (RAF)~\cite{Wang2017Solving}; Wirtinger flow (WF) with an activation function~\cite{Lizhenzhen}; message passing based methods, such as approximate message passing for amplitude-based optimization (AMP.A)~\cite{Ma2018Approximate}; and the prox-linear procedure via composite optimization~\cite{Duchi2017Solving}. These approaches lead to significant computational advantage over the lifting based SDR counterparts. However, due to the existence of possibly many stationary points, nonconvex approaches in general are computationally intractable to find the true solution $\bm{x}^{\natural}$ from an arbitrary initial guess. How to obtain an initialization that is good enough for these nonconvex methods is the key ingredient for a provable algorithm with convergence guarantee. 
In a nutshell, for nonconvex approaches for Gaussian model, there are generally two stages involved to obtain a global minimizer:
\begin{enumerate}
\item The initialization stage: for Gaussian model, a good initial guess can be computed efficiently by some routines, such as the spectral method~\cite{Netrapalli2013,Candes2015}, the truncated spectral method~\cite{Chen2015b} and their variants, as well as the orthogonality-promoting method~\cite{Wang16} (a.k.a. null method~\cite{Chen2015a}) and the (re-weighted) maximal correlation method~\cite{Wang2017Solving}. The obtained initial guess is proved to be good enough, provided the number of measurements $m$ is on the order of $n$. 
\item The refinement stage: a refinement scheme, generally a gradient descent algorithm, is then used to push the initial guess toward to the global minimizer. The convergence to global minimizer is shown through analysis of the population behavior of different objectives with the help of Gaussian model assumption~\cite{Candes2015,Chen2015b,Wang16,Wang2017Solving}. 
\end{enumerate}

For those state-of-the-art provable algorithms for phase retrieval, exact recovery  for Gaussian model is guaranteed when the number of measurements $m=cn\log n$~\cite{Candes2015,Demanet2013} or $m=cn$~\cite{Candes2013b,Wang16,Wang2017Solving,Chen2015b}, where $c$ is often a fixed but rather large constant independent of $n$. The reported smallest numerical constant $c$ for real case among the existing algorithms is $2$ for RAF~\cite{Wang2017Solving}. It is shown recently that for Gaussian model the intensity-based least-squares objective (used by WF) and its truncated modification with an activation function admits a benign geometric structure without spurious local minima from $m=\mathcal{O}(n\log^3n)$~\cite{Sun2016,Li16} and $m=\mathcal{O}(n)$~\cite{Lizhenzhen} measurements respectively. However, the number of measurements $m=cn$ needs a large constant $c$  far greater than $2$ for real case. The required number of measurements for good recovery is even larger for non-Gaussian measurements, e.g., for ptychography~\cite{Qian2014}. 

When the number of measurements is large enough, a good initialization may be obtained and in this regime,  those provable nonconvex algorithms often work well. For applications where the number of measurements is limited, a good initialization is difficult to obtain and nonconvex approaches usually fail to locate a satisfying solution. For example, due to the significant bias of the statistical properties of finite samples of Gaussian vectors, the initial guess becomes worse when $m$ becomes smaller.
Moreover, most initialization strategies only work for Gaussian model and fails for non-Gaussian measurements in most practices. The quality of initialization also downgrades with increased noise in the measurements. Without the help of a good initial guess, nonconvex approaches stagnate at local stationary points, which are far away from the true solution. 

\subsection{Contribution}
\label{sec:paper}

Motivated by the global convergence of SDR PhaseLift approach, we revert to PhaseLift formulation of~\eqref{eq:1}. Its tightness is proved for Gaussian model for which the rank one solution to PhaseLift provides the true solution to~\eqref{eq:1}. To avoid the computation cost and storage of a typical SDP solver for PhaseLift, we propose a simple, well-scaled method to find the solution to PhaseLift by the decomposition of a Hermitian matrix $\bm{X}$ into $\bm{Y}\bm{Y}^*$, where $\bm{Y}\in\mathbb{C}^{n\times p}$. Our method, termed as \texttt{IncrePR}, indirectly solves PhaseLift by a sequence of nonconvex optimizations in space $\mathbb{C}^{n\times p}$ with incremental column number $p$. Our incremental nonconvex optimization is similar to the approach for SDP proposed in literature~\cite{Journ2010Low}. However, it is the first time applied to solving the phase retrieval problem with the extension to Hermitian matrix. For Gaussian model, to pursue sharper \emph{phase transition} for noiseless measurements, where \emph{phase transition} is defined as the numerical limit of ratio $m/n$ to successfully recover $\bm{x}^{\natural}$ from equations~\eqref{eq:1}, we propose to solve three PhaseLift problems involving different parameters with a restart strategy. \texttt{IncrePR} with restart shows the sharpest phase transition among the state-of-the-art solvers for both real and complex cases. It achieves perfect recovery if the number of measurements $m\geq 1.9n$ and $m\geq 2.7n$ for real and complex cases respectively. It beats the theoretical guarantee of $m=2n-1$~\cite{Balan2006} and $m\geq 4n-4$~\cite{Balan2015} for generic real and complex cases respectively. The extra decreasing of $m/n$ is due to the fact that Gaussian vector belongs to generic vector, so Gaussian model exhibits smaller phase transition than the above theoretical guarantee for generic vector measurements. For noisy measurements, \texttt{IncrePR} with restart locates good approximate solution with the help of global convergence and stability of PhaseLift~\cite{Candes2012}.

Although the tightness of PhaseLift is no longer guaranteed for other type of more practical measurements, the rank one approximation of the solution to PhaseLift is generally close to the true solution to~\eqref{eq:1}. We can then feed it to nonconvex approaches to recover a much better solution than that of direct nonconvex approaches from a random initialization. In other words,  \texttt{IncrePR} with restart can be used as a good initializer. Numerical tests show that \texttt{IncrePR} with restart outperforms the state-of-the-art solver, reweighted amplitude flow (RAF)~\cite{Wang2017Solving}, for transmission matrix measurements (available in a public phase package~\cite{Chandra2017PhasePack}) when the number of measurements is small. For more difficult structured oversampling Fourier phase retrieval, \texttt{IncrePR} with restart is comparable to the hybrid input-output (HIO) projection-based method, while all other methods built for Gaussian model fail to compute a meaningful reconstruction~\cite{Candes2013} when the number of measurements is not large enough. 

\subsection{Other relevant literature}

The decomposition-based nonconvex optimization idea has been applied to solving linear cost function~\cite{Burer2003A} and nonlinear cost function~\cite{Journ2010Low} with affine constraints on the cone of positive semidefinite matrices. It has also been extended to general (asymmetrical) low rank matrix recovery problems~\cite{Mishra2013Low,Ciliberto2017Reexamining}. The first attempt to apply decomposition $\bm{X}=\bm{Y}\bm{Y}^*$ within a fixed matrix dimension $\mathbb{C}^{n\times p}$ to solve PhaseLift~\cite{Candes2013} was proposed in~\cite{WenHuang}, where the authors proposed a dynamic ranking decreasing strategy. In this work, we employ the incremental strategy and propose a post-verifiable condition for convergence and termination by establishing a relation between the stationary solutions to the nonconvex and convex problems respectively. Compared to the fixed dimension strategy, \texttt{IncrePR} performs better and terminates quickly with very few incremental steps in our experiments.

To develop a scalable nonconvex algorithm, authors in~\cite{Yurtsever2017Sketchy} considered applying the conditional gradient method to another SDR problem of~\eqref{eq:1} with constraint on the maximum trace of $\bm{X}$. To avoid updating of the full matrix $\bm{X}$, they derived a novel method, termed as SketchyCGM, driven by the measurement vectors and sketchy of the underlying low rank matrix during iterations. Compared to the standard PhaseLift solver, the algorithm outperforms when the number of measurements $m$ is large enough, and underperforms when $m$ is close to the phase transition point~\cite{Chandra2017PhasePack}.

\subsection{Notation and organization}

We use $\mathbb{H}^n$ and $\mathbb{H}_+^n$ to denote the set of complex and positive semidefinite $n\times n$ Hermitian matrices respectively. $\langle \bm{U},\bm{V}\rangle=\re(\trace(\bm{U}^*\bm{V}))$ denotes the standard inner product in $\mathbb{H}^n$, where $\{\cdot\}^*$ is the conjugated transpose. We use bold font to denote vectors and bold capital font for matrices. $\bm{a}\circ\bm{b}$ and $\bm{a}/\bm{b}$ denote the entrywise multiplication and division between vectors $\bm{a}$ and $\bm{b}$, respectively.
The organization of the paper is as follows. We first review the PhaseLift formulation and cite known results of exact and stable recovery for noiseless and noisy Gaussian models for completeness. Then we describe our incremental nonconvex approach (\texttt{IncrePR}) for solving the PhaseLift problem. 
Combined with a restart strategy, \texttt{IncrePR} can achieve optimal phase transition for Gaussian model or obtain a good initialization for non-Gaussian measurements. Various numerical tests and comparisons are presented to demonstrate the performance of our proposed method. Conclusion is presented at the end.  

\section{PhaseLift: a review}
\label{sec:algor-anoth-trunc}

Note that our proposed method, \texttt{IncrePR}, works for both real and complex cases. For generality, we formulate the problem and the algorithm for the complex case. 

The problem~\eqref{eq:1} is to find a vector $\bm{x}\in\mathbb{C}^n$ subject to the system of $m$ quadratic equations $\lvert \bm{a}_i^*\bm{x}\rvert^2=b_i,i=1,2,\ldots,m$. It can be viewed as an instance of quadratic constraint quadratic programming (QCQP). Following the classical Schor's relaxation~\cite{Luo2010Semidefinite}, the vector variable $\bm{x}$ can be lifted into a matrix variable $\bm{X}=\bm{x}\bm{x}^*$~\cite{Candes2013}. The nonlinear quadratic constraints for $\bm{x}$ can be cast as linear constraints of $\bm{X}$, i.e.,
\begin{equation*}
  \lvert\bm{a}_i^*\bm{x}\rvert^2=\trace(\bm{A}_i\bm{X})=b_i, \quad \bm{A}_i=\bm{a}_i\bm{a}_i^*.
\end{equation*}
Thus problem~\eqref{eq:1} can be formulated as the following matrix problem
\begin{equation*}
  \begin{aligned}
    \text{find}\quad &\bm{X}\in\mathbb{C}^{n\times n}\\
    \text{s.t.}\quad &\trace(\bm{A}_i\bm{X})=b_i,i=1,2,\ldots,m,\\
    & \bm{X}\succeq 0,\quad \rank(\bm{X})=1.
  \end{aligned}
\end{equation*}
 The problem is still NP-hard due to nonconvexity of the rank one constraint. It can be equivalently transformed into a rank minimization problem:
\begin{equation*}
  \begin{aligned}
    \min_{\bm{X}}\quad & \rank(\bm{X})\\
    \text{s.t.}\quad & \trace(\bm{A}_i\bm{X})=b_i,i=1,2,\ldots,m,\\
    & \bm{X}\succeq 0.
  \end{aligned}
\end{equation*}
When the rank minimization is further relaxed to trace minimization, the new problem is of the following form and called semidefinite relaxation (SDR) or PhaseLift,
\begin{equation}
  \label{eq:sdp}
  \begin{aligned}
    \min_{\bm{X}}\quad & \trace(\bm{X})\\
    \text{s.t.} \quad & \trace(\bm{A}_i\bm{X})=b_i,i=1,2,\ldots,m,\\
    & \bm{X}\succeq 0.
  \end{aligned}
\end{equation}

Cand\`{e}s et al showed that the solution to problem~\eqref{eq:sdp} is of rank one and its rank one decomposition factor is exactly the solution to problem~\eqref{eq:1} with high probability for Gaussian model, provided the number of measurements $m=\mathcal{O}(n)$~\cite{Candes2013b}. Furthermore, the solution is also stable for noisy measurements, no matter what distribution the noise is drawn from. The trace minimization in~\eqref{eq:sdp} is unnecessary in some cases. The most obvious case is that the intensity measurements determine the trace of $\bm{X}$, such as the Fourier phase retrieval and the case that the identity matrix is in the span of $\{\bm{A}_i\}_{i=1}^m$. When $m=\mathcal{O}(n\log n)$ for Gaussian model, the trace minimization is unnecessary~\cite{Demanet2013}. Note that there exists a gap for $m$ between $\mathcal{O}(n)$ and $\mathcal{O}(n\log n)$, which is the critical transition of the tightness between the trace minimization problem~\eqref{eq:sdp} and the set feasibility problem, i.e., problem~\eqref{eq:sdp} without trace minimization. The available theoretical tightness and phase transition results are for Gaussian model.  For other type of measurements, the optimal solution to~\eqref{eq:sdp} may not be of rank one while its rank one approximation is often close to the rank one matrix $\bm{x}^{\natural}{\bm{x}^{\natural}}^*$.  

Due to possible noise in measurements, Cand\`{e}s et al~\cite{Candes2013} proposed a first-order scheme to solve the trace regularized SDP problem:
\begin{equation}
\label{eq:phaselift}
\begin{aligned}
  \min_{\bm{X}} \quad & f_{\lambda}(\bm{X})=f_0(\bm{X})+\lambda\trace(\bm{X}),\\
  \text{s.t.}\quad & \bm{X}\succeq 0,
\end{aligned}
\end{equation}
where the parameter $\lambda$ trades off between the data fidelity term $f_0(\bm{X})$ and the low rank promoting trace term. To distinguish the two problems, we call~\eqref{eq:sdp} PhaseLift with trace minimization and~\eqref{eq:phaselift} PhaseLift with trace regularization. The data fidelity term $f_0(\bm{X})$ depends on the noise model. For least-squares loss, the objective function in matrix variable $\bm{X}$ is 
\begin{equation}
\label{eq:func}
  f_0(\bm{X}) = \frac{1}{2m}\norm{\mathcal{A}(\bm{X})-\bm{b}}^2.
\end{equation}
For Poisson likelihood estimation, the objective function is
\begin{equation}
  \label{eq:pfunc}
  f_0(\bm{X})=\langle \bm{1},\mathcal{A}(\bm{X})-\bm{b}\circ\log(\mathcal{A}(\bm{X}))\rangle=\sum_{i=1}^m\trace(\bm{A}_i\bm{X})-b_i\log(\trace(\bm{A}_i\bm{X})).
\end{equation}
The linear operator in~\eqref{eq:func} and~\eqref{eq:pfunc} is defined as 
\begin{equation*}
\mathcal{A}:\mathbb{H}^n\mapsto \mathbb{R}^m, \mathcal{A}(\bm{X})=[\trace(\bm{A}_1\bm{X}), \trace(\bm{A}_2\bm{X}), \cdots, \trace(\bm{A}_m\bm{X})]^T.
\end{equation*}
And its adjoint operator is denoted as $\mathcal{A}^*:\mathbb{R}^m\mapsto\mathbb{H}^n, \mathcal{A}^*(\bm{b})=\sum_{i=1}^mb_i\bm{A}_i$. 

Problem~\eqref{eq:phaselift} was solved by first-order method with matrix variable $\bm{X}$ in~\cite{Candes2013}, whereas the storage of $\bm{X}\in\mathbb{H}^n$ and the projection onto $\mathbb{H}_+^n$ are intractable for large-scale problems. Motivated by decomposition of variable $\bm{X}$ into $\bm{Y}\bm{Y}^*$, where $\bm{Y}\in\mathbb{C}^{n\times p}$ and $ p\ll n$ due to the low rank property of $\bm{X}$, we propose an incremental nonconvex optimization approach to indirectly solve~\eqref{eq:phaselift} with significantly reduced computational cost and storage.

\section{Nonconvex solver for~\eqref{eq:phaselift}}
\label{sec:meta}

With the choices of objectives~\eqref{eq:func} or~\eqref{eq:pfunc}, problem~\eqref{eq:phaselift} is a smooth convex optimization problem in the cone of $\mathbb{H}_+^n$. We substitute expression $\bm{X}=\bm{Y}\bm{Y}^*$ with $\bm{Y}\in \mathbb{C}^{n\times p}$ into~\eqref{eq:phaselift}, then the dimension of optimization variable decreases from $n^2$ to $np$. And the positive semidefiniteness of $\bm{X}$ is automatically satisfied by the decomposition.

The resulting nonconvex problem in $\bm{Y}$ is 
\begin{equation}
  \label{eq:nonconvex}
  \min_{\bm{Y}\in\mathbb{C}^{n\times p}}\quad \tilde{f}_p^{\lambda}(\bm{Y})=f_0(\bm{Y}\bm{Y}^*)+\lambda\norm{\bm{Y}}_F^2.
\end{equation}
If $p=1$, \eqref{eq:nonconvex} becomes the $\ell_2$ regularized Wirtinger flow (a.k.a. intensity-based least-squares) in the natural parameter space $\mathbb{C}^n$. 

The computation storage and cost of solving~\eqref{eq:nonconvex} is significantly less than solving~\eqref{eq:phaselift} when $p\ll n$. 
Once $p$ is fixed, minimizing $\tilde{f}_p^{\lambda}(\bm{Y})$ will locate a stationary point. The question is how to change $p$ appropriately to help us to find the true solution to~\eqref{eq:phaselift} or a good approximation of it. The answer can be obtained from the analysis of the optimality conditions of the two associated problems.   

\subsection{Connection between the convex and nonconvex formulations}
\label{sec:phase-retrieval-via}

The possibility of indirectly solving problem~\eqref{eq:phaselift} by solving~\eqref{eq:nonconvex} is implied by the relation between their first and second order optimality conditions. For notation simplicity, we omit the script $\lambda$ and $p$ in $f_{\lambda}(\bm{X})$ and $\tilde{f}_p^{\lambda}(\bm{Y})$ when $\lambda$ and $p$ are fixed. Given a function $f:\mathbb{H}^n\to\mathbb{R}:\bm{X}\mapsto f(\bm{X})$, we define the function 
\begin{equation*}
  \tilde{f}:\mathbb{C}^{n\times p}\to \mathbb{R}: \bm{Y}\mapsto\tilde{f}(\bm{Y})=f(\bm{Y}\bm{Y}^*).
\end{equation*}
For a differentiable function $f$, the notation $\nabla_{\bm{X}}f(\bm{X}_0)$ refers to the gradient of $f$ at $\bm{X}_0$ with respect to the variable $\bm{X}$ in the Wirtinger sense~\cite{Candes2015,Kreutz-Delgado2009}, since here we deal with the real-valued function of complex variables, i.e.,
\begin{equation*}
  [\nabla_{\bm{X}} f(\bm{X}_0)]_{i,j}=\frac{\partial f}{\partial \overline{X}_{i,j}}(\bm{X}_0).
\end{equation*}
In the real case, the gradient definition becomes the standard one. 

The directional derivative of $f$ at $\bm{X}_0$ in a direction $\bm{Z}\in \mathbb{H}^n$ is defined as
\begin{equation*}
  D_{\bm{X}}f(\bm{X}_0)[\bm{Z}]=\lim_{t\to 0}\frac{f(\bm{X}_0+t\bm{Z})-f(\bm{X}_0)}{t}.
\end{equation*}
If $f$ is differentiable, we have
\begin{equation*}
  D_{\bm{X}}f(\bm{X}_0)[\bm{Z}]=\langle \nabla_{\bm{X}} f(\bm{X}_0),\bm{Z}\rangle,
\end{equation*}
and
\begin{equation*}
  \nabla_{\bm{Y}} \tilde{f}(\bm{Y})= 2\nabla_{\bm{X}} f(\bm{Y}\bm{Y}^*)\bm{Y}, \quad
  D_{\bm{Y}}\tilde{f}(\bm{Y})[\bm{Z}]=\langle 2\nabla_{\bm{X}} f(\bm{Y}\bm{Y}^*)\bm{Y}, \bm{Z} \rangle, \quad \forall \bm{Y}, \bm{Z}\in\mathbb{C}^{n\times p}.
\end{equation*}

We state the first-order KKT conditions for problems~\eqref{eq:phaselift} and~\eqref{eq:nonconvex} in the following lemmas.
\begin{lem}
\label{lem:1}
  A global minimizer of problem~\eqref{eq:phaselift} is a Hermitian matrix $\bm{X}\in\mathbb{H}^n$ such that the first-order optimality conditions hold:
  \begin{equation}
    \label{eq:3}
    \begin{aligned}
      (\nabla_{\bm{X}} f_0(\bm{X})+\lambda \bm{I})\bm{X}&=0\\
      \nabla_{\bm{X}} f_0(\bm{X})+\lambda \bm{I}&\succeq 0\\
      \bm{X}&\succeq 0.
    \end{aligned}
  \end{equation}
The optimality conditions are sufficient and necessary for our convex optimization~\eqref{eq:phaselift}. 
\end{lem}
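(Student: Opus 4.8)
The plan is to derive the stated KKT system directly from the Lagrangian of the convex problem~\eqref{eq:phaselift}, and then to argue that, because the problem is convex with a cone constraint that admits a strictly feasible point (Slater's condition), these first-order conditions are both necessary and sufficient for global optimality. First I would write the Lagrangian as $L(\bm{X},\bm{S}) = f_0(\bm{X}) + \lambda\trace(\bm{X}) - \langle \bm{S},\bm{X}\rangle$ with dual variable $\bm{S}\in\mathbb{H}^n_+$ associated to the constraint $\bm{X}\succeq 0$. Stationarity in $\bm{X}$ (in the Wirtinger sense, as set up in the excerpt, noting that $\nabla_{\bm X}\trace(\bm X)=\bm I$) gives $\nabla_{\bm{X}} f_0(\bm{X}) + \lambda \bm{I} = \bm{S}$; combining this with dual feasibility $\bm{S}\succeq 0$ yields the second line $\nabla_{\bm{X}} f_0(\bm{X})+\lambda\bm{I}\succeq 0$, and complementary slackness $\langle\bm S,\bm X\rangle=0$ together with $\bm S,\bm X\succeq 0$ forces $\bm{S}\bm{X}=0$, i.e. the first line $(\nabla_{\bm{X}} f_0(\bm{X})+\lambda\bm{I})\bm{X}=0$. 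Primal feasibility $\bm{X}\succeq 0$ is the third line. This recovers~\eqref{eq:3}.

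For the equivalence with global optimality, I would invoke the standard fact that for a convex program with a convex objective over a closed convex cone, the KKT conditions are necessary whenever a constraint qualification holds and sufficient unconditionally. Necessity: $f_0$ in both~\eqref{eq:func} and~\eqref{eq:pfunc} is convex and differentiable on the feasible region, and the semidefinite cone constraint satisfies Slater's condition (take any $\bm X=\bm Y\bm Y^*$ with $\bm Y$ of full row rank, which is strictly feasible, $\bm X\succ 0$, and there are no equality constraints here), so every global minimizer satisfies~\eqref{eq:3}. Sufficiency: if $\bm{X}$ satisfies~\eqref{eq:3}, then for any feasible $\bm{X}'\succeq 0$, convexity of $f_\lambda$ gives $f_\lambda(\bm{X}') - f_\lambda(\bm{X}) \geq \langle \nabla_{\bm X} f_0(\bm X)+\lambda\bm I,\ \bm{X}'-\bm{X}\rangle = \langle \bm{S},\bm{X}'\rangle - \langle\bm{S},\bm{X}\rangle$, where $\bm S:=\nabla_{\bm X} f_0(\bm X)+\lambda\bm I\succeq 0$; the first term is $\geq 0$ since $\bm S,\bm X'\succeq 0$, and the second term vanishes by the complementarity line $\bm S\bm X=0$ (hence $\trace(\bm S\bm X)=0$, and being real-valued $\langle\bm S,\bm X\rangle=0$). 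Thus $f_\lambda(\bm{X}')\geq f_\lambda(\bm{X})$ for all feasible $\bm{X}'$, so $\bm{X}$ is a global minimizer.

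One small technical point I would handle carefully is passing between the matrix identity $\bm{S}\bm{X}=0$ and the scalar complementary slackness $\langle \bm{S},\bm{X}\rangle = \re\trace(\bm{S}\bm{X}) = 0$: for $\bm S,\bm X\in\mathbb H^n_+$ these are equivalent, since writing $\bm X^{1/2}$ one has $\trace(\bm S\bm X)=\|\bm S^{1/2}\bm X^{1/2}\|_F^2$, which is zero iff $\bm S^{1/2}\bm X^{1/2}=0$ iff $\bm S\bm X=0$. The other routine check is that the Wirtinger gradient of the linear term $\lambda\trace(\bm X)$ is $\lambda\bm I$, and that $f_0$ is genuinely convex in $\bm X$ — for~\eqref{eq:func} it is a composition of an affine map with a squared norm, and for~\eqref{eq:pfunc} each summand $t - b_i\log t$ is convex in $t=\trace(\bm A_i\bm X)>0$ — so that the subgradient inequality used in the sufficiency argument is valid. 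The only genuine obstacle, if any, is verifying Slater's condition / a constraint qualification rigorously for the cone $\mathbb{H}^n_+$ so that necessity of KKT is justified; everything else is bookkeeping with Wirtinger derivatives and properties of PSD matrices.
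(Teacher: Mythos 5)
Your proposal is correct: the Lagrangian/KKT derivation with Slater's condition for necessity and the convexity (gradient) inequality for sufficiency is the standard argument, and your handling of the equivalence between $\langle\bm S,\bm X\rangle=0$ and $\bm S\bm X=0$ for positive semidefinite matrices is exactly the right technical point to make explicit. The paper itself states this lemma without proof, treating it as the well-known first-order optimality condition for a smooth convex program over the cone $\mathbb{H}_+^n$, so there is nothing to compare against beyond noting that your write-up supplies precisely the standard justification the authors omit.
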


\begin{lem}
\label{lem:2}
  If $\bm{Y}$ is a stationary point of problem~\eqref{eq:nonconvex}, then it holds that
  \begin{equation*}
    \left(\nabla_{\bm{X}} f_0(\bm{Y}\bm{Y}^*)+\lambda \bm{I}\right)\bm{Y} = 0.
  \end{equation*}
\end{lem}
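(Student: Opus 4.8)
The plan is to compute the Wirtinger gradient of $\tilde f_p^\lambda(\bm Y)=f_0(\bm Y\bm Y^*)+\lambda\|\bm Y\|_F^2$ with respect to $\bm Y$ and set it to zero. The first term has already been handled in the excerpt: by the chain rule for the lifting $\bm X=\bm Y\bm Y^*$ we have $\nabla_{\bm Y}\bigl(f_0(\bm Y\bm Y^*)\bigr)=2\,\nabla_{\bm X}f_0(\bm Y\bm Y^*)\,\bm Y$. So the only new computation is the gradient of the regularizer $\bm Y\mapsto\lambda\|\bm Y\|_F^2=\lambda\,\trace(\bm Y^*\bm Y)$, whose Wirtinger gradient is $2\lambda\bm Y$. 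Adding the two contributions gives $\nabla_{\bm Y}\tilde f_p^\lambda(\bm Y)=2\bigl(\nabla_{\bm X}f_0(\bm Y\bm Y^*)+\lambda\bm I\bigr)\bm Y$.

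Concretely, the steps I would carry out are: (i) recall the definition of stationarity — $\bm Y$ is a stationary point of the unconstrained smooth problem \eqref{eq:nonconvex} iff $\nabla_{\bm Y}\tilde f_p^\lambda(\bm Y)=0$; (ii) invoke the chain-rule identity $\nabla_{\bm Y}\tilde f(\bm Y)=2\nabla_{\bm X}f(\bm Y\bm Y^*)\bm Y$ already displayed in the excerpt, applied with $f=f_0$; (iii) differentiate the extra term $\lambda\|\bm Y\|_F^2$ directly, noting $\|\bm Y\|_F^2=\trace(\bm Y^*\bm Y)$ so that $[\nabla_{\bm Y}\|\bm Y\|_F^2]_{i,j}=\partial\trace(\bm Y^*\bm Y)/\partial\overline{Y}_{i,j}=Y_{i,j}$, hence $\nabla_{\bm Y}\|\bm Y\|_F^2=\bm Y$; (iv) combine to get $\nabla_{\bm Y}\tilde f_p^\lambda(\bm Y)=2\bigl(\nabla_{\bm X}f_0(\bm Y\bm Y^*)+\lambda\bm I\bigr)\bm Y$; (v) set this to zero and divide by $2$ to obtain the claimed identity $\bigl(\nabla_{\bm X}f_0(\bm Y\bm Y^*)+\lambda\bm I\bigr)\bm Y=0$.

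Honestly there is no real obstacle here: the statement is essentially a bookkeeping consequence of the Wirtinger chain rule, and the only mild care needed is the Wirtinger-derivative convention — differentiating with respect to $\overline{Y}_{i,j}$ rather than $Y_{i,j}$ — so that the factor of $2$ and the absence of a conjugate come out right. One could alternatively verify the formula via the directional-derivative identity $D_{\bm Y}\tilde f(\bm Y)[\bm Z]=\langle 2\nabla_{\bm X}f(\bm Y\bm Y^*)\bm Y,\bm Z\rangle$ from the excerpt together with $D_{\bm Y}(\lambda\|\bm Y\|_F^2)[\bm Z]=\langle 2\lambda\bm Y,\bm Z\rangle$ and the fact that a stationary point makes the directional derivative vanish for all $\bm Z\in\mathbb{C}^{n\times p}$; then nondegeneracy of the inner product forces $\bigl(\nabla_{\bm X}f_0(\bm Y\bm Y^*)+\lambda\bm I\bigr)\bm Y=0$. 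I would present whichever of these two routes is more consistent with the conventions used elsewhere in the paper, most likely the gradient route since \eqref{eq:nonconvex} is stated as an unconstrained minimization.
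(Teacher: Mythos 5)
Your proposal is correct and matches the route the paper intends: Lemma~\ref{lem:2} is stated without proof precisely because it follows by applying the displayed chain rule $\nabla_{\bm{Y}}\tilde f(\bm{Y})=2\nabla_{\bm{X}}f(\bm{Y}\bm{Y}^*)\bm{Y}$ to $f_\lambda=f_0+\lambda\trace(\cdot)$, using $\norm{\bm{Y}}_F^2=\trace(\bm{Y}\bm{Y}^*)$ and $\nabla_{\bm{X}}\trace(\bm{X})=\bm{I}$, exactly as you do. The only caveat is to use one normalization consistently for both terms (your step~(iii) entrywise computation yields $\bm{Y}$ while your first paragraph uses $2\lambda\bm{Y}$, and the relative factor between the two terms matters here); your directional-derivative variant resolves this cleanly and confirms the stated identity.
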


Comparing Lemma~\ref{lem:1} and~\ref{lem:2}, we have the following relation between the stationary solutions of problems~\eqref{eq:phaselift} and~\eqref{eq:nonconvex}. 
\begin{theorem}
\label{thm:1}
A stationary point $\bm{Y}$ of the nonconvex problem~\eqref{eq:nonconvex} provides a global minimizer $\bm{Y}\bm{Y}^*$ of problem~\eqref{eq:phaselift}, if the matrix
\begin{equation}
  \label{eq:2}
  \bm{S}_{\bm{Y}} = \nabla_{\bm{X}} f_0(\bm{Y}\bm{Y}^*) + \lambda \bm{I}
\end{equation}
is positive semidefinite.
\end{theorem}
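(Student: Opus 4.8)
The plan is to verify that the hypotheses of Lemma~\ref{lem:1} are satisfied by the matrix $\bm{X} = \bm{Y}\bm{Y}^*$ whenever $\bm{S}_{\bm{Y}}$ is positive semidefinite, since Lemma~\ref{lem:1} tells us those conditions are both necessary and sufficient for $\bm{X}$ to be a global minimizer of the convex problem~\eqref{eq:phaselift}. There are three conditions in~\eqref{eq:3} to check. First, $\bm{X} = \bm{Y}\bm{Y}^* \succeq 0$ automatically, by construction of the decomposition. Second, the stationarity equation $(\nabla_{\bm{X}} f_0(\bm{Y}\bm{Y}^*) + \lambda \bm{I})\bm{Y} = 0$ from Lemma~\ref{lem:2} is exactly the statement $\bm{S}_{\bm{Y}}\bm{Y} = 0$; multiplying on the right by $\bm{Y}^*$ gives $\bm{S}_{\bm{Y}}\bm{Y}\bm{Y}^* = \bm{S}_{\bm{Y}}\bm{X} = 0$, which is precisely the first line of~\eqref{eq:3}. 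Third, the condition $\nabla_{\bm{X}} f_0(\bm{X}) + \lambda \bm{I} \succeq 0$ is literally the hypothesis $\bm{S}_{\bm{Y}} \succeq 0$.

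So the proof is essentially a bookkeeping argument: I would first invoke Lemma~\ref{lem:2} to get $\bm{S}_{\bm{Y}}\bm{Y} = 0$, then right-multiply by $\bm{Y}^*$ to obtain the complementarity condition $\bm{S}_{\bm{Y}}\bm{X} = 0$ with $\bm{X} = \bm{Y}\bm{Y}^*$, then observe that the remaining two conditions in~\eqref{eq:3} hold by hypothesis and by construction respectively, and finally conclude by the sufficiency clause of Lemma~\ref{lem:1}. One small point worth stating carefully is that $\bm{S}_{\bm{Y}}$ is Hermitian, so that ``$\succeq 0$'' makes sense and so that $\bm{S}_{\bm{Y}}\bm{X} = 0$ together with $\bm{X}$ Hermitian also yields $\bm{X}\bm{S}_{\bm{Y}} = 0$; this symmetry is implicit in the KKT formulation but harmless to note. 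Hermiticity of $\nabla_{\bm{X}} f_0(\bm{X})$ follows from $f_0$ being a real-valued function on $\mathbb{H}^n$ in the Wirtinger sense, as set up before the lemmas.

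There is no real obstacle here — the theorem is an immediate corollary of the two preceding lemmas, and the only ``content'' is the right-multiplication by $\bm{Y}^*$ that upgrades the nonconvex stationarity condition (an equation involving $\bm{Y}$) to the convex complementarity condition (an equation involving $\bm{X} = \bm{Y}\bm{Y}^*$). If anything requires a sentence of care, it is making explicit that we are using the \emph{sufficiency} direction of Lemma~\ref{lem:1}: a priori a stationary point of the nonconvex surrogate need only be a stationary point, not a minimizer, of the convex problem, and it is exactly the extra condition $\bm{S}_{\bm{Y}} \succeq 0$ that closes this gap by certifying global optimality through the KKT system of a convex program. I would end by remarking that this gives the promised post-verifiable certificate: after computing a stationary $\bm{Y}$ for a given $p$, one forms $\bm{S}_{\bm{Y}}$ and checks its smallest eigenvalue; nonnegativity certifies global optimality, while a negative eigenvalue signals that $p$ must be increased, which is the basis for the incremental scheme.
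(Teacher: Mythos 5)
Your argument is correct and is exactly the one the paper intends: the theorem is stated as an immediate consequence of ``comparing Lemma~\ref{lem:1} and Lemma~\ref{lem:2},'' and your bookkeeping — right-multiplying the stationarity identity $\bm{S}_{\bm{Y}}\bm{Y}=0$ by $\bm{Y}^*$ to get the complementarity condition, then invoking the sufficiency of the convex KKT system together with the hypothesis $\bm{S}_{\bm{Y}}\succeq 0$ and the automatic $\bm{Y}\bm{Y}^*\succeq 0$ — simply makes that implicit comparison explicit.
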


The stationary point $\bm{Y}$ of problem~\eqref{eq:nonconvex} may be a saddle point. Here we state the second-order optimality condition for the unconstrained problem.
\begin{lem}
  For a local minimizer $\bm{Y}\in\mathbb{C}^{n\times p}$ of~\eqref{eq:nonconvex}, it holds that
  \begin{equation*}
    \trace(\bm{Z}^*D_{\bm{Y}}\left(\nabla_{\bm{Y}} (f_0(\bm{Y}\bm{Y}^*)+\lambda \norm{\bm{Y}}_F^2)\right)[\bm{Z}])\geq 0
  \end{equation*}
for any matrix $\bm{Z}\in\mathbb{C}^{n\times p}$.
\end{lem}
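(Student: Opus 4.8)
The plan is to derive the statement as the standard necessary condition for a local minimizer of an unconstrained smooth real-valued function on $\mathbb{C}^{n\times p}$, namely that the Hessian quadratic form is positive semidefinite at the minimizer. First I would recall that for any $\bm{Z}\in\mathbb{C}^{n\times p}$ and $t\in\mathbb{R}$, the scalar function $g(t)=\tilde f_p^\lambda(\bm{Y}+t\bm{Z})$ is smooth and, if $\bm{Y}$ is a local minimizer of $\tilde f_p^\lambda$, has a local minimum at $t=0$. Hence $g'(0)=0$ (which recovers the first-order condition of Lemma~\ref{lem:2}) and $g''(0)\ge 0$. The second derivative $g''(0)$ is exactly the expression in the statement: by the chain rule along the ray, $g'(t)=\langle \nabla_{\bm{Y}}\tilde f_p^\lambda(\bm{Y}+t\bm{Z}),\bm{Z}\rangle$, so $g''(0)=D_{\bm{Y}}\big(\nabla_{\bm{Y}}(f_0(\bm{Y}\bm{Y}^*)+\lambda\norm{\bm{Y}}_F^2)\big)[\bm{Z}]$ paired with $\bm{Z}$ in the real inner product, which is the trace expression written in the lemma. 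This gives the inequality for every direction $\bm{Z}$.

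The key computational step I would carry out explicitly is the expansion of $g''(0)$ into a usable form, since the later incremental algorithm needs it. Using $\nabla_{\bm{Y}}\tilde f = 2\nabla_{\bm{X}}f_0(\bm{Y}\bm{Y}^*)\bm{Y}+2\lambda\bm{Y}$ from the excerpt, I differentiate once more along direction $\bm{Z}$. The trace term contributes $2\lambda\norm{\bm{Z}}_F^2$. The data term contributes two pieces: one from differentiating $\nabla_{\bm{X}}f_0$ (the chain rule applied to $\bm{X}=\bm{Y}\bm{Y}^*$, whose directional derivative is $\bm{Y}\bm{Z}^*+\bm{Z}\bm{Y}^*$), giving $\langle \nabla^2_{\bm{X}}f_0(\bm{Y}\bm{Y}^*)[\bm{Y}\bm{Z}^*+\bm{Z}\bm{Y}^*]\,\bm{Y},\bm{Z}\rangle$, and one from the explicit $\bm{Y}$ in the product, giving $\langle \nabla_{\bm{X}}f_0(\bm{Y}\bm{Y}^*)\bm{Z},\bm{Z}\rangle$. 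Collecting these, the second-order condition becomes
\begin{equation*}
  2\big\langle \bm{S}_{\bm{Y}}\bm{Z},\bm{Z}\big\rangle + \big\langle \nabla^2_{\bm{X}}f_0(\bm{Y}\bm{Y}^*)[\bm{Y}\bm{Z}^*+\bm{Z}\bm{Y}^*],\,\bm{Y}\bm{Z}^*+\bm{Z}\bm{Y}^*\big\rangle \ge 0,
\end{equation*}
with $\bm{S}_{\bm{Y}}$ as in~\eqref{eq:2}; since $f_0$ in~\eqref{eq:func} is convex the second term is $\ge 0$, which is the observation that makes the condition informative.

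The main obstacle is bookkeeping with Wirtinger derivatives of complex matrix variables: one must be careful that $g(t)$ is differentiated along a \emph{real} parameter $t$, so the Wirtinger gradient and its conjugate both enter and the factor-of-two conventions from the excerpt have to be tracked consistently; a sloppy treatment would produce the wrong coefficient on $\langle \nabla_{\bm{X}}f_0(\bm{Y}\bm{Y}^*)\bm{Z},\bm{Z}\rangle$ versus the Hessian-of-$f_0$ term. I would handle this by working throughout with the real inner product $\langle\bm{U},\bm{V}\rangle=\re\trace(\bm{U}^*\bm{V})$ already fixed in the paper and by reducing everything to the single-variable function $g(t)$, where the classical fact ``local min $\Rightarrow g''(0)\ge 0$'' applies verbatim. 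The only other point worth a sentence is that $D_{\bm{Y}}$ of a gradient field is symmetric (the Hessian of a twice-differentiable function), which legitimizes writing $g''(0)$ as the trace pairing in the stated form rather than a one-sided expression.
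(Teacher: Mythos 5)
Your proposal is correct: the paper states this lemma without proof, as the classical second-order necessary condition for an unconstrained local minimizer, and your restriction-to-a-line argument ($g(t)=\tilde f_p^\lambda(\bm{Y}+t\bm{Z})$, $g''(0)\ge 0$) is exactly the standard justification it implicitly relies on. Your further expansion of the quadratic form into $2\langle \bm{S}_{\bm{Y}}\bm{Z},\bm{Z}\rangle$ plus the $\nabla^2_{\bm{X}}f_0$ term is consistent with the paper (it reduces to Lemma~\ref{lem:sec} when $\bm{Y}\bm{Z}^*=0$), so nothing further is needed.
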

\begin{lem}
\label{lem:sec}
  For any matrix $\bm{Z}\in\mathbb{C}^{n\times p}$ such that $\bm{Y}\bm{Z}^*=0$, the following equality holds:
  \begin{equation*}
    \frac{1}{2}\trace(\bm{Z}^*D_{\bm{Y}}\left(\nabla_{\bm{Y}} (f_0(\bm{Y}\bm{Y}^*)+\lambda \norm{\bm{Y}}_F^2)\right)[\bm{Z}])=\trace(\bm{Z}^*\bm{S}_{\bm{Y}}\bm{Z}).
  \end{equation*}
\end{lem}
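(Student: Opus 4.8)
The plan is to evaluate the second-order form on the left-hand side directly, starting from the closed-form gradient of the nonconvex objective already recorded in the excerpt, and then to exploit the constraint $\bm{Y}\bm{Z}^*=0$ via cyclicity of the trace.

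First I would observe that $\tilde{f}_p^{\lambda}(\bm{Y})=f_0(\bm{Y}\bm{Y}^*)+\lambda\norm{\bm{Y}}_F^2=f_{\lambda}(\bm{Y}\bm{Y}^*)$, so applying the identity $\nabla_{\bm{Y}}\tilde{f}(\bm{Y})=2\nabla_{\bm{X}}f(\bm{Y}\bm{Y}^*)\bm{Y}$ with $f=f_{\lambda}$ gives the compact expression $\nabla_{\bm{Y}}\tilde{f}_p^{\lambda}(\bm{Y})=2\bm{S}_{\bm{Y}}\bm{Y}$, with $\bm{S}_{\bm{Y}}$ as in~\eqref{eq:2}. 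Differentiating the matrix-valued map $\bm{Y}\mapsto 2\bm{S}_{\bm{Y}}\bm{Y}$ in the direction $\bm{Z}$ by the product rule, using $D_{\bm{Y}}\bm{Y}[\bm{Z}]=\bm{Z}$, gives
\begin{equation*}
  D_{\bm{Y}}\bigl(\nabla_{\bm{Y}}\tilde{f}_p^{\lambda}(\bm{Y})\bigr)[\bm{Z}]
  =2\bigl(D_{\bm{Y}}\bm{S}_{\bm{Y}}[\bm{Z}]\bigr)\bm{Y}+2\bm{S}_{\bm{Y}}\bm{Z},
\end{equation*}
where $D_{\bm{Y}}\bm{S}_{\bm{Y}}[\bm{Z}]=D_{\bm{Y}}\bigl(\nabla_{\bm{X}}f_0(\bm{Y}\bm{Y}^*)\bigr)[\bm{Z}]$ (the $\lambda\bm{I}$ part being constant), obtained from the chain rule together with $D_{\bm{Y}}(\bm{Y}\bm{Y}^*)[\bm{Z}]=\bm{Z}\bm{Y}^*+\bm{Y}\bm{Z}^*$; its precise form as a Hessian of $f_0$ will not matter.

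Next I would apply $\trace(\bm{Z}^*\,\cdot\,)$ to the displayed identity. The second summand contributes exactly $2\trace(\bm{Z}^*\bm{S}_{\bm{Y}}\bm{Z})$. For the first, setting $\bm{M}:=D_{\bm{Y}}\bm{S}_{\bm{Y}}[\bm{Z}]$ and cycling the trace, $\trace(\bm{Z}^*\bm{M}\bm{Y})=\trace(\bm{M}\bm{Y}\bm{Z}^*)=\trace(\bm{M}\cdot\bm{0})=0$, which is exactly where the hypothesis $\bm{Y}\bm{Z}^*=0$ enters. Dividing by two yields $\tfrac{1}{2}\trace(\bm{Z}^*D_{\bm{Y}}(\nabla_{\bm{Y}}\tilde{f}_p^{\lambda}(\bm{Y}))[\bm{Z}])=\trace(\bm{Z}^*\bm{S}_{\bm{Y}}\bm{Z})$, which is the claimed equality.

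I do not anticipate a real obstacle; the one point that needs a little care is the Wirtinger-calculus chain rule for the matrix-valued map $\bm{Y}\mapsto\nabla_{\bm{X}}f_0(\bm{Y}\bm{Y}^*)$ — one checks that along a real parameter $t$ the increment $(\bm{Y}+t\bm{Z})(\bm{Y}+t\bm{Z})^*-\bm{Y}\bm{Y}^*$ has linear part $\bm{Z}\bm{Y}^*+\bm{Y}\bm{Z}^*$ and that $\nabla_{\bm{X}}f_0$ is differentiable at $\bm{Y}\bm{Y}^*$, both immediate for the least-squares loss~\eqref{eq:func}, where $\nabla_{\bm{X}}f_0$ is affine, and for the Poisson loss~\eqref{eq:pfunc}, where it is smooth on the interior of its domain. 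Once the product and chain rules are in place, the identity is pure trace bookkeeping; the structural reason it works is that the extra Hessian term always appears with a factor $\bm{Y}\bm{Z}^*$ that cyclicity can expose.
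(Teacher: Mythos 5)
Your argument is correct and is essentially identical to the paper's own proof: both write $\nabla_{\bm{Y}}\tilde f_p^{\lambda}(\bm{Y})=2\bm{S}_{\bm{Y}}\bm{Y}$, differentiate by the product rule to get the two terms $2\bm{S}_{\bm{Y}}\bm{Z}+2\bigl(D_{\bm{Y}}\bm{S}_{\bm{Y}}[\bm{Z}]\bigr)\bm{Y}$, and kill the second term by cycling the trace so that the factor $\bm{Y}\bm{Z}^*=0$ appears. No substantive difference.
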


\begin{proof}
  According to the definition of directional derivative, we have
  \begin{equation*}
\begin{aligned}
& \frac{1}{2}\trace\left(\bm{Z}^*D_{\bm{Y}}\left(\nabla_{\bm{Y}} (f_0(\bm{Y}\bm{Y}^*)+\lambda \norm{\bm{Y}}_F^2)\right)[\bm{Z}]\right)  \\
=&\trace(\bm{Z}^*(\nabla_{\bm{X}} f_0(\bm{Y}\bm{Y}^*) + \lambda \bm{I})\bm{Z})+\trace(D_{\bm{Y}}\left(\nabla_{\bm{X}} f_0(\bm{Y}\bm{Y}^*) + \lambda \bm{I}\right)[\bm{Z}]\bm{Y}\bm{Z}^*)\\
=&\trace(\bm{Z}^*\bm{S}_{\bm{Y}}\bm{Z}),
\end{aligned}
\end{equation*}
where we use the equality $\trace(\bm{A}\bm{B})=\trace(\bm{B}\bm{A})$.
\end{proof}

The natural question is how to choose the column number $p$ of $\bm{Y}$. We first give a sufficient condition by the following Theorem~\cite{Journ2010Low}, which we extended to Hermitian matrices in a straightforward way. 
\begin{theorem}[\cite{Journ2010Low}, Theorem 7]
\label{thm:local}
  A local minimizer $\bm{Y}$ of problem~\eqref{eq:nonconvex} provides a global minimizer $\bm{X}=\bm{Y}\bm{Y}^*$ of problem~\eqref{eq:phaselift} if it is rank deficient.
\end{theorem}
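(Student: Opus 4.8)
The plan is to combine Lemma~\ref{lem:sec}, the second-order necessary condition for local minimizers, and Theorem~\ref{thm:1}, which says that a stationary point $\bm{Y}$ of~\eqref{eq:nonconvex} yields a global minimizer $\bm{Y}\bm{Y}^*$ of~\eqref{eq:phaselift} as soon as $\bm{S}_{\bm{Y}}=\nabla_{\bm{X}}f_0(\bm{Y}\bm{Y}^*)+\lambda\bm{I}\succeq 0$. So it suffices to show that rank deficiency of a local minimizer $\bm{Y}$ forces $\bm{S}_{\bm{Y}}\succeq 0$. First I would use that $\bm{Y}$ being a local minimizer is in particular a stationary point, so by Lemma~\ref{lem:2} we have $\bm{S}_{\bm{Y}}\bm{Y}=0$; in particular the column space of $\bm{Y}$ lies in the kernel of $\bm{S}_{\bm{Y}}$, and $\bm{S}_{\bm{Y}}$ is automatically Hermitian (it is $\nabla_{\bm{X}}f_0$ plus a real multiple of the identity, and $\nabla_{\bm{X}}f_0$ is Hermitian for the objectives~\eqref{eq:func},~\eqref{eq:pfunc}).

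Next, rank deficiency means $\rank(\bm{Y})=r<p$, so there is a nonzero vector $\bm{v}\in\mathbb{C}^p$ with $\bm{Y}\bm{v}=0$. The key construction is: for an arbitrary vector $\bm{u}\in\mathbb{C}^n$, set $\bm{Z}=\bm{u}\bm{v}^*\in\mathbb{C}^{n\times p}$. Then $\bm{Y}\bm{Z}^*=\bm{Y}\bm{v}\bm{u}^*=0$, so $\bm{Z}$ is an admissible direction in Lemma~\ref{lem:sec}, and that lemma gives
\begin{equation*}
  \tfrac{1}{2}\trace\bigl(\bm{Z}^*D_{\bm{Y}}(\nabla_{\bm{Y}}\tilde{f}_p^{\lambda}(\bm{Y}))[\bm{Z}]\bigr)=\trace(\bm{Z}^*\bm{S}_{\bm{Y}}\bm{Z})=\trace(\bm{v}\bm{u}^*\bm{S}_{\bm{Y}}\bm{u}\bm{v}^*)=\norm{\bm{v}}^2\,\bm{u}^*\bm{S}_{\bm{Y}}\bm{u}.
\end{equation*}
By the second-order necessary condition (the preceding Lemma), the left-hand side is $\geq 0$; since $\norm{\bm{v}}^2>0$, we conclude $\bm{u}^*\bm{S}_{\bm{Y}}\bm{u}\geq 0$ for every $\bm{u}\in\mathbb{C}^n$, i.e., $\bm{S}_{\bm{Y}}\succeq 0$. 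Feeding this into Theorem~\ref{thm:1} shows $\bm{X}=\bm{Y}\bm{Y}^*$ is a global minimizer of~\eqref{eq:phaselift}, completing the argument.

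I expect the only real subtlety to be bookkeeping about the Wirtinger directional derivatives: one must check that $\bm{Z}=\bm{u}\bm{v}^*$ is genuinely an allowed perturbation direction in the complex setting and that the identity in Lemma~\ref{lem:sec} applies verbatim with this choice (the hypothesis $\bm{Y}\bm{Z}^*=0$ is exactly what makes the cross term vanish, which is why the construction is tailored to a null vector of $\bm{Y}$). Everything else — Hermiticity of $\bm{S}_{\bm{Y}}$, the trace manipulation $\trace(\bm{v}\bm{u}^*\bm{S}_{\bm{Y}}\bm{u}\bm{v}^*)=\norm{\bm{v}}^2\bm{u}^*\bm{S}_{\bm{Y}}\bm{u}$ via cyclicity, and the appeal to Theorem~\ref{thm:1} — is routine. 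The conceptual heart is simply that rank deficiency supplies enough ``free'' second-order test directions (all of $\mathbb{C}^n$, through the rank-one ansatz) to certify positive semidefiniteness of $\bm{S}_{\bm{Y}}$ on the whole space rather than merely on the orthogonal complement of the range of $\bm{Y}$.
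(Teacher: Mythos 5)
Your argument is correct and is essentially the paper's own proof: the paper takes a full-rank decomposition $\bm{Y}=\tilde{\bm{Y}}\bm{M}^*$ and tests $\bm{S}_{\bm{Y}}$ against all directions $\bm{Z}=\tilde{\bm{Z}}\bm{M}_{\perp}^*$ with $\bm{M}_{\perp}$ spanning the orthogonal complement of the column space of $\bm{M}$, while you use the equivalent rank-one version $\bm{Z}=\bm{u}\bm{v}^*$ with $\bm{Y}\bm{v}=0$; in both cases $\bm{Y}\bm{Z}^*=0$, Lemma~\ref{lem:sec} plus the second-order necessary condition give $\bm{S}_{\bm{Y}}\succeq 0$, and Theorem~\ref{thm:1} finishes. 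No gap.
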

\begin{proof}
  Assume the rank of $\bm{Y}\in\mathbb{C}^{n\times p}$ is $r$, we have the full rank decomposition $\bm{Y}=\tilde{\bm{Y}}\bm{M}^*$, where $\tilde{\bm{Y}}\in\mathbb{C}^{n\times r}$ and $\bm{M}\in\mathbb{C}^{p\times r}$. We take $\bm{M}_{\perp}\in\mathbb{C}^{p\times (p-r)}$ as an orthogonal basis for the orthogonal complement of the column space of $\bm{M}$. For any matrix $\tilde{\bm{Z}}\in\mathbb{C}^{n\times (p-r)}$, the matrix $\bm{Z}=\tilde{\bm{Z}}\bm{M}_{\perp}^*$ satisfies $\bm{Y}\bm{Z}^*=0$. It follows that $\trace(\bm{Z}^*\bm{S}_{\bm{Y}}\bm{Z})\geq 0$ for all matrices $\bm{Z}$. Thus the matrix $\bm{S}_{\bm{Y}}$ is semi-definite and $\bm{X}=\bm{Y}\bm{Y}^*$ is a global minimizer of problem~\eqref{eq:phaselift}.
\end{proof}

The worst case scenario is $p=n$, for which any local minimizer of problem~\eqref{eq:nonconvex} provides a global minimizer $\bm{X}=\bm{Y}\bm{Y}^*$ of problem~\eqref{eq:phaselift}.  Because, if $\bm{Y}$ is rank deficient, then the local minimizer provides a global minimizer of problem~\eqref{eq:phaselift}. Otherwise, $\bm{Y}$ is full rank, then the matrix $\bm{S}_{\bm{Y}}$ becomes zero by the equality $\bm{S}_{\bm{Y}}\bm{Y}=0$ from Lemma \ref{lem:2}. Intuitively, for PhaseLift, $p$ should not be large. In particular, the unique solution to the PhaseLift with trace regularization~\eqref{eq:phaselift} should be low rank if PhaseLift with trace minimization~\eqref{eq:sdp} is tight, e.g., for Gaussian model. In~\cite{WenHuang}, $p$ was set to below $10$ initially and then decreased further dynamically during iterations.

Here we propose to solve~\eqref{eq:nonconvex} starting from $p=p_0$ and a random initial guess $\bm{Y}_0\in\mathbb{C}^{n\times p_0}$. 
However, one may stagnate at a stationary point due to nonconvexity. The key idea is to escape from the current stationary point and decrease the objective function of~\eqref{eq:phaselift} further by increasing the column number $p$ of the decomposition factor $\bm{Y}$. 
Moreover, we construct the initialization $\bm{Y}_0\in\mathbb{C}^{n\times (p_0+1)}$ from the stationary point to problem~\eqref{eq:nonconvex} with $\bm{Y}\in\mathbb{C}^{n\times p_0}$. Hence the objective function of~\eqref{eq:phaselift} is monotonically deceasing along the $p$ increasing procedure. We successively increase $p$, until the minimum of the objective function is achieved at the global minimizer. In other words, we indirectly solve the original convex problem~\eqref{eq:phaselift} by solving a sequence of nonconvex optimization problems~\eqref{eq:nonconvex}. The remaining question is what is the criterion for one to stop increasing $p$ and the iteration. Theorem~\ref{thm:1} provides the post-verifiable condition for our purpose. 

\section{The incremental approach}
\label{sec:an-meta-algorithm}

When we solve the nonconvex problem~\eqref{eq:nonconvex} and reach a stationary point $\bm{Y}$, it may be a local minimizer or a saddle point. If  condition in Theorem~\ref{thm:1} is not satisfied, i.e., the matrix $\bm{S}_{\bm{Y}}=\nabla_{\bm{X}} f_0(\bm{Y}\bm{Y}^*)+\lambda \bm{I}$ is not positive semidefinite, it means that we have not found a global minimum of the convex problem~\eqref{eq:phaselift} yet.  The incremental algorithm is designed to monotonically decrease the objective function~\eqref{eq:phaselift} by increasing $p$.

Starting from $p=p_0$ and assume we arrive at a stationary point $\bm{Y}\in \mathbb{C}^{n\times p}$ of problem~\eqref{eq:nonconvex}. 
If $\bm{S}_{\bm{Y}}=\nabla_{\bm{X}} f_0(\bm{Y}\bm{Y}^*) + \lambda \bm{I}$ is positive semidefinite, then $\bm{Y}\bm{Y}^*$ is a solution to~\eqref{eq:phaselift} and one can terminate. Otherwise we increase $p$ to $p+1$ and minimize problem~\eqref{eq:nonconvex} by constructing an initial point $\bm{Y}_p=[\bm{Y}|\bm{0}^{n\times 1}] \in \mathbb{C}^{n\times (p+1)}$. It should be noted that if $(\nabla_{\bm{X}} f_0(\bm{Y}\bm{Y}^*)+\lambda \bm{I})\bm{Y}=0$, then $(\nabla_{\bm{X}} f_0(\bm{Y}_p\bm{Y}_p^*)+\lambda \bm{I})\bm{Y}_p=0$, i.e., $\bm{Y}_p$ is also a stationary point of problem~\eqref{eq:nonconvex} in $\mathbb{C}^{n\times (p+1)}$. 

Since $\bm{S}_{\bm{Y}}$ must have at least one negative eigenvalue, from Theorem~\ref{thm:1} and Lemma~\ref{lem:sec}, the matrix $\bm{Z}=[\bm{0}^{n\times p}|\bm{v}]\in\mathbb{C}^{n\times (p+1)}$, where $\bm{v}$ is the eigenvector responding to the smallest eigenvalue of $\bm{S}_{\bm{Y}}$, satisfies
\begin{equation*}
  \frac{1}{2}\trace(\bm{Z}^*D_{\bm{Y}}(\nabla_{\bm{X}} f_0(\bm{Y}_p\bm{Y}_p^*)+\lambda \bm{I})[\bm{Z}])=\bm{v}^T\bm{S}_{\bm{Y}_p}\bm{v}\leq 0, 
\end{equation*}
since $\bm{Y}_p\bm{Z}^*=0$. Thus $\bm{Z}$ is a descent direction of~\eqref{eq:nonconvex} at $\bm{Y}_p$ in $\mathbb{C}^{n\times (p+1)}$. This allows us to escape the saddle point $\bm{Y}_p$ along the descent direction and minimize the objective function further through one backtracking step. After one decreasing step, we reach a new point $\bm{Y}_0\in\mathbb{C}^{n\times(p+1)}$ and it provides a good initial guess for the next optimization problem~\eqref{eq:nonconvex} for $\bm{Y}\in \mathbb{C}^{n\times (p+1)}$. This procedure is done repeatedly till the termination of the algorithm. Note that simply padding a zero column to current stationary point provides a good initial guess for the next stage. 
We summarize all the above elements in Algorithm~\ref{alg:inc}. We call the incremental nonconvex optimization algorithm to solve PhaseLift~\eqref{eq:phaselift} for phase retrieval \texttt{IncrePR}. The parameter $\epsilon$ is a threshold on the eigenvalues of $\bm{S}_{\bm{Y}}$ to decide the nonnegativity of this matrix. $\epsilon$ is chosen depending on the problem and desired accuracy.

\begin{algorithm}[H]
  \caption{Incremental algorithm (\texttt{IncrePR}) for solving problem~\eqref{eq:phaselift}\label{alg:inc}}
  \begin{algorithmic}[1]
    \REQUIRE Initial column number $p_0$, initial guess $\bm{Y}_0\in\mathbb{C}^{n\times p_0}$ and tolerance parameter $\epsilon$.
\ENSURE Solution $\bm{X}=\bm{Y}_p\bm{Y}_p^*$ to problem~\eqref{eq:phaselift}.
\STATE Initialize $p=p_0$
\REPEAT
\STATE Apply an optimization method to find a stationary point $\bm{Y}_p$ of problem~\eqref{eq:nonconvex} from initial guess $\bm{Y}_0$.
\STATE Find the smallest eigenvalue $\nu_{\min}$ of $\bm{S}_{\bm{Y}_p}$ defined in~\eqref{eq:2}.
\IF {$\nu_{\min}\geq -\epsilon$}
\STATE \textbf{break}
\ELSE
\STATE Find the corresponding eigenvector $\bm{v}$ of the matrix $\bm{S}_{\bm{Y}_p}$.
\STATE $p\leftarrow p+1$, $\bm{Y}_p\leftarrow [\bm{Y}_p|\bm{0}]$
\STATE Apply one descent step along descent direction $\bm{Z}_p=[\bm{0}|\bm{v}]$ (such as backtracking line-search), arrive at point $\bm{Y}_0$.
\ENDIF
\UNTIL{\textbf{convergence}}
  \end{algorithmic}
\end{algorithm}

Based on Theorem~\ref{thm:1}, positive semidefiniteness of the matrix $\bm{S}_{\bm{Y}}$ at a stationary point of the nonconvex problem~\eqref{eq:nonconvex} indicates global convergence of \texttt{IncrePR}. Since all eigenvalues of Hermitian matrix are real-valued, we just have to compute the smallest eigenvalue of $\bm{S}_{\bm{Y}}$. In particular,  we use the LOBPCG routine to locate the smallest eigenvalue without the need to form the matrix $\bm{S}_{\bm{Y}}$~\cite{Knyazev2006Toward}.

In general, one can employ two explicit criteria to terminate our incremental algorithm in terms of accuracy and the computational cost respectively. The first criteria is to check if the matrix $\bm{S}_{\bm{Y}}$ is positive semidefinite at a stationary point of the nonconvex problem~\eqref{eq:nonconvex} within the threshold $\epsilon$. The smaller the absolute value of the smallest negative eigenvalue of matrix $\bm{S}_{\bm{Y}}$, the closer the computed solution is to a global minimizer of~\eqref{eq:phaselift}.
In other words, absolute value of the smallest eigenvalue of matrix $\bm{S}_{\bm{Y}}$ can be used as an indicator of the progress of the incremental algorithm and can be used to monitor the convergence. The upper limit of $p$ for $\bm{Y}$ can be used as another criteria to forcibly limit the computation and storage cost to solve the nonconvex problem~\eqref{eq:nonconvex}. It is worth to note that our incremental approach always decreases the objective function~\eqref{eq:phaselift} no matter what the error tolerance $\epsilon$ is for each nonconvex problem. 
In particular, one does not need to set it too small at the beginning and then decrease it during the incremental process.

Compared to the dynamical rank decreasing strategy used in~\cite{WenHuang}, our incremental strategy is more effective and natural. In our algorithm, one only needs to solve a sequence of unconstrained nonconvex problems~\eqref{eq:nonconvex}. 
For each nonconvex problem, one simply use a first-order method to find a stationary point and use Theorem~\ref{thm:1} to decide whether to continue or terminate. However, if only a stationary point is located for each nonconvex problem,  Algorithm~\ref{alg:inc} may not find the solution and terminate even when $p$ reaches $n$.
In this worst case scenario,  one should compute a local minimizer of \eqref{eq:nonconvex} by second-order methods and apply Theorem~\ref{thm:local}. Compared to first-order method, to find a local minimizer of \eqref{eq:nonconvex}, second-order optimality condition has to be checked when a stationary point is reached and, if the stationary point is a saddle point, a negative curvature direction needs to be found. For the application of phase retrieval, the worst case scenario happen very rarely. Thus we typically use a first order method to locate a stationary point in our algorithm. Although, applying first-order method may lead to a larger termination $p$, numerical experiments show that both first-order and second-order methods for \eqref{eq:nonconvex}  work well and terminate with small $p$  for phase retrieval applications. In the next section, we give the explicit gradient and Hessian for two data fidelity functions~\eqref{eq:func} and~\eqref{eq:pfunc} and briefly review optimization methods for completeness. 

\section{Unconstrained optimization}
\label{sec:riem-optim}

In this section, we provide components of the optimization scheme for solving the following nonconvex unconstrained problem:
\begin{equation*}
  \min_{\bm{Y}\in\mathbb{C}^{n\times p}}\quad \tilde{f}(\bm{Y}) = f_0(\bm{Y}\bm{Y}^*).
\end{equation*}
For the trace regularized minimization \eqref{eq:nonconvex} ($\lambda\ne 0$), the gradient and Hessian have an additional simple term. For simplicity we skip the trace term in the following derivation.

For the nonconvex objective $\tilde{f}_0$ of~\eqref{eq:func}, its gradient and Hessian along the direction $\bm{\xi}_{\bm{Y}}$ are given by
\begin{equation*}
\begin{aligned}
  \nabla \tilde{f}_0 (\bm{Y}) &=\frac{1}{m}\mathcal{A}^*(\mathcal{A}(\bm{Y}\bm{Y}^*)-\bm{b})\bm{Y},\\
  \nabla^2 \tilde{f}_0(\bm{Y})[\bm{\xi}_{\bm{Y}}] & = \frac{1}{m}\mathcal{A}^*(\mathcal{A}(\bm{Y}\bm{Y}^*)-\bm{b})\bm{\xi}_{\bm{Y}} + \frac{1}{m}\mathcal{A}^*(\mathcal{A}(\bm{Y}\bm{\xi}_{\bm{Y}}^*+\bm{\xi}_{\bm{Y}}\bm{Y}^*))\bm{Y}.
\end{aligned}
\end{equation*}

For the nonconvex objective $\tilde{f}_0$ of~\eqref{eq:pfunc}, its gradient and Hessian along the direction $\bm{\xi}_{\bm{Y}}$ are given by
\begin{equation*}
\begin{aligned}
  \nabla \tilde{f}_0 (\bm{Y}) &= 2\mathcal{A}^*\left(\bm{1}-\frac{\bm{b}}{\mathcal{A}(\bm{Y}\bm{Y}^*)}\right)\bm{Y},\\
  \nabla^2 \tilde{f}_0(\bm{Y})[\bm{\xi}_{\bm{Y}}] & = 2\mathcal{A}^*\left(\bm{1}-\frac{\bm{b}}{\mathcal{A}(\bm{Y}\bm{Y}^*)}\right)\bm{\xi}_{\bm{Y}} + 2\mathcal{A}^*\left(\frac{\bm{b}}{(\mathcal{A}(\bm{Y}\bm{Y}^*))^2}\circ\left(\mathcal{A}(\bm{Y}\bm{\xi}_{\bm{Y}}^*+\bm{\xi}_{\bm{Y}}\bm{Y}^*)\right)\right)\bm{Y}.
\end{aligned}
\end{equation*}
The involved computation $\mathcal{A}(\bm{Y}\bm{Y}^*)$ and $\mathcal{A}^*(\cdot)$ is done by matrix-vector multiplication. When dealing with structured measurements,  such as Fourier phase retrieval and the associated coded diffraction pattern~\cite{Candes2014}, the computation can be done more efficiently.

Note that the function $\tilde{f}(\bm{Y})$ is invariant under an orthonormal transformation. Thus the stationary points are not isolated, which can cause complications for second-order optimization methods~\cite{Absil2009Optimization}. 
To locate a local minimizer for a nonconvex optimization, a second-order method, such as trust-region method, is needed. Since our nonconvex optimization is invariant up to an orthonormal transformation, one should consider the Riemannian optimization within the quotient space, e.g., the Manopt package~\cite{Boumal2013Manopt}, which only requires inputs of the gradient and Hessian in Euclidean space.

On the other hand, the invariance does not affect first-order methods. To avoid computing projections onto the quotient space at each iteration step, where Sylvester equation and Newton equation for Riemannian optimization need to be solved~\cite{Boumal2013Manopt}, one can use first-order line search method to locate a stationary point for each nonconvex optimization \eqref{eq:nonconvex}. First-order line search method constructs a sequence,
\begin{equation}
  \label{eq:7}
  \bm{Y}_{k+1} = \bm{Y}_k+\alpha_k\bm{d_k},
\end{equation}
where
$\bm{d}_k\in\mathbb{C}^{n\times p}$ is a decent direction and 
$\alpha_k\in\mathbb{R}$ is the step size along the descent direction at $k$th iteration. 
The simplest choice, a.k.a.
gradient descent, of $\bm{d_k}$ is $- \nabla \tilde{f}(\bm{Y}_k)$. Although first-order method for \eqref{eq:nonconvex} with fixed $p$ only converges to a stationary point, it is suffice since the incremental approach decreases the function~\eqref{eq:phaselift} by increasing $p$ to $p+1$. Other more efficient methods with different choices of descent directions, such as CG and LBFGS~\cite{Li161,Li16} methods, can also be used. 

There is no explicit line search solution for step size $\alpha_k$ for the likelihood estimation function~\eqref{eq:pfunc}. The inexact line search step size should satisfy the well known Wolfe conditions (whereas $\bm{g}$ denotes $\nabla \tilde{f}$):
\begin{subequations}
  \begin{equation}
    \label{eq:10}
    \tilde{f}(\bm{Y}_k + \alpha_k \bm{d}_k)\leq \tilde{f}(\bm{Y}_k) + c_1\alpha_k \re(\bm{d}_k^*\bm{g}_k)
  \end{equation}
and
\begin{equation}
  \label{eq:11}
  \re (\bm{d}_k^*\bm{g}_{k+1}) \geq c_2 \re(\bm{d}_k^*\bm{g}_k),
\end{equation}
\end{subequations}
where condition $0<c_1<c_2<1$ is satisfied. Equations~\eqref{eq:10} and~\eqref{eq:11} are
known as the sufficient decrease and curvature condition
respectively~\cite{Nocedal2006}. For intensity-based least-squares function \eqref{eq:func}, one can find the step size $\alpha_k$ from exact line search by solving a cubic equation. A similar routine can be found in~\cite{Li16} to solve  the nonconvex optimization~\eqref{eq:nonconvex} with $p=1$. 

\section{Solving phase retrieval with restart}

For noiseless Gaussian model, when the number of measurements $m=\mathcal{O}(n\log n)$, the parameter $\lambda$ in~\eqref{eq:phaselift} can be set to zero, one can solve~\eqref{eq:phaselift} by \texttt{IncrePR} with $p_0=1$. If $m$ is between $\mathcal{O}(n)$ and $\mathcal{O}(n\log n)$, PhaseLift with trace minimization~\eqref{eq:sdp} is tight. We can solve a series of problems~\eqref{eq:phaselift} with different $\lambda$'s to approach the solution to~\eqref{eq:sdp}. Standard continuity approach of solving~\eqref{eq:phaselift} with a series of decreasing $\lambda$'s can be exploited, where solution to current $\lambda$ is the initial guess for next $\lambda$. The continuity approach is used to select the optimal regularization parameter for low rank matrix recovery problem~\cite{Mishra2013Low}.

For Gaussian model, if the number of measurements $m=\mathcal{O}(n)$, the effect of trace regularization in~\eqref{eq:sdp} becomes important.
We propose the following three step algorithm with a restart strategy. We start with solving~\eqref{eq:phaselift} with $\lambda=0$ from a random initial guess. However, 
the solution $\bm{X}_1$ is only aiming to fit the data and not of rank one in general, since the solution to the convex problem~\eqref{eq:phaselift} with $\lambda=0$ may not be unique from $\mathcal{O}(n)$ measurements. Then~\eqref{eq:phaselift} with $\lambda=\lambda_0$ is solved starting from $\bm{X}_1$ where trace regularization is used to promote low rank. The solution $\bm{X}_2$ should be close to the solution to problem~\eqref{eq:sdp}. We then solve~\eqref{eq:phaselift} with $\lambda=0$ again starting from a rank one approximation of $\bm{X}_2$. Solving the first two problems can be regarded as the initialization stage and solving the last problem is the refinement stage. We use  \texttt{IncrePR} (Algorithm~\ref{alg:inc}) to solve each SDP. Since a rank one approximation is extracted and used to feed into the last step, we call it a restart strategy.
The detailed description of the three-step algorithm for synthetic Gaussian model is presented in Algorithm~\ref{alg:2}.  
In the case of $m=\mathcal{O}(n\log n)$ measurements for noiseless Gaussian model, Algorithm~\ref{alg:2} terminates at stage one, since solution $\bm{X}_1$ is exactly of rank one~\cite{Demanet2013}. For noisy measurements, we also use  Algorithm~\ref{alg:2} to solve~\eqref{eq:phaselift} to obtain a stable solution.

\begin{algorithm}[H]
  \caption{\texttt{IncrePR} with restart for Gaussian model\label{alg:2}}
  \begin{algorithmic}[1]
    \REQUIRE Initial $p_0=1$, penalty parameter $\lambda_0$ and initialization $\bm{Y}_0\in\mathbb{C}^{n}$.
\ENSURE An approximate solution $\bm{x}$ to problem~\eqref{eq:1}.
\STATE Initialize $p=p_0$ and $\lambda=0$, use \texttt{IncrePR} to solve~\eqref{eq:phaselift} and obtain solution $\bm{X}_1=\bm{Y}_1\bm{Y}_1^*$.
\IF {$\rank(\bm{Y}_1)=1$}
\STATE Set $\bm{x}=\bm{Y}_1$
\STATE \textbf{break}
\ELSE 
\STATE Set $p_0$ to the column number of $\bm{Y}_1$ and $\lambda=\lambda_0$. Initialize $\bm{Y}_0=\bm{Y}_1$.
\STATE Use \texttt{IncrePR} to solve~\eqref{eq:phaselift} and obtain solution $\bm{X}_2=\bm{Y}_2\bm{Y}_2^*$.
\STATE Extract the best rank one approximation $\bm{y}_2$ to $\bm{Y}_2$.
\STATE Set $p_0=1$, $\lambda=0$, and initialize $\bm{Y}_0=\bm{y}_2$.
\STATE  Use \texttt{IncrePR} to solve~\eqref{eq:phaselift} and obtain solution $\bm{X}_3=\bm{Y}_3\bm{Y}_3^*$. 
\STATE Extract the rank one approximation $\bm{x}$ of $\bm{Y}_3$.
\ENDIF 
  \end{algorithmic}
\end{algorithm}

We pause here to compare \texttt{IncrePR} with the PhaseLift solver in~\cite{Candes2013} . First of all, the two algorithms solve the same SDR PhaseLift. So if the solution to SDR PhaseLift is unique, then PhaseLift and \texttt{IncrePR} give the same solution. The difference is that \texttt{IncrePR} uses an incremental nonconvex solver instead of the typical SDP solver. SDP solver operates on an $n\times n$ matrix, while \texttt{IncrePR} operates on an $n\times p$ matrix, and $p$ is generally less than ten, as demonstrated in numerical tests. At each iteration, the computation complexity of the eigenvalue decomposition of SDP solver is $\mathcal{O}(n^3)$ for projection onto $\mathbb{H}_+^n$.  For \texttt{IncrePR}, the main cost is checking the positive semi-definiteness of $\bm{S}_{\bm{Y}_p}$ defined in~\eqref{eq:2} and finding its smallest eigenvalue and the corresponding eigenvector. This operation has to be performed each time when $p$ is increased. 
\texttt{IncrePR}  with restart can also achieve sharper phase transition $\mathcal{O}(1)$. $\lambda$ is set to $10^{-3}$ in~\cite{Candes2013} and hence the penalty of trace term generally is not strong enough to find a good approximate solution from limited number of measurements. Thus its best phase transition is roughly $\mathcal{O}(\log n)$~\cite{Demanet2013}.

For other types of more practical measurements, such as the transmission matrix and Fourier measurements, the tightness of~\eqref{eq:sdp} is not guaranteed. However, one can still use \texttt{IncrePR} to obtain the global minimizer of SDR and use its rank one approximation
as a good approximation to the global minimizer of the nonconvex problem~\eqref{eq:nonconvex} with $p=1$. In particular, one can optimize~\eqref{eq:phaselift} using \texttt{IncrePR} with the restart strategy (Algorithm~\ref{alg:2}) a few times, where the current solution is the initialization of next round of Algorithm~\ref{alg:2}. If needed, the rank one approximation obtained from \texttt{IncrePR} can be used as an initial guess for a nonconvex solver for the nonconvex problem~\eqref{eq:nonconvex} with $\lambda=0$.
 
\section{Numerical tests}
\label{sec:numerical-test}

\subsection{Gaussian model}

For Gaussian model, a benchmark for phase retrieval~\eqref{eq:1} is to compare the phase transition of each algorithm. The \emph{phase transition} is the critical point of the ratio of the number of measurements $m$ to the length of the signal $n$. When $m/n$ is above the phase transition, the unique solution to~\eqref{eq:1} can always be located by the algorithm. 

We would like to illustrate the empirical phase transition of \texttt{IncrePR} with restart for Gaussian model. This is done by Monte-Carlo simulations. Here we fix the length of signal $n=400$ and vary the number of measurements $m=\{1n,1.1n,\cdots, 5n\}$. For each pair $(n,m)$, we generate $50$ instances of problem~\eqref{eq:1}. For each instance, we generate a 1-dimension real/complex signal of length $n=400$ at random, and the sampling vectors $\bm{a}_r$'s drawn from real/complex Gaussian distribution and obtain the measurement intensity data. We then compute the \emph{recovery rate}, i.e., the percentage of the successful recoveries for the $50$ instances. We use the relative error to measure the success of recovery. We denote the solution computed by  \texttt{IncrePR} with restart as $\bm{x}$. The relative error is defined
as 
\begin{equation}
  \label{eq:18}
  relerr = \min_{\lvert c\rvert=1}\frac{\norm{c\bm{x}-\bm{x}^{\natural}}_2}{\norm{\bm{x}^{\natural}}_2},
\end{equation}
where $c$ is to get rid of the effect of the constant phase shift for phase retrieval problem.

We consider a signal to be perfectly recovered if the relative
error~\eqref{eq:18} is below $10^{-5}$. The least-squares data fidelity~\eqref{eq:func} is used in problem~\eqref{eq:phaselift}. For real and complex cases, $\lambda_0$ in Algorithm~\ref{alg:2} is set to $10^2$. The maximum iteration for each nonconvex problem is set to $1000$ and $3000$ for real and complex models respectively. For real case, the termination $\epsilon$ for incremental nonconvex approach is set to $100$, $500$ and $1$ for the three convex problems respectively. For complex case, the termination $\epsilon$ is set to $800$, $1000$ and $1$ respectively. We do not need to set $\epsilon$ too small for the first two stages, since they mainly serve to provide a good initial guess for the last stage through the restart strategy. Before exploring the phase transition, we first study the effect of first- and second-order methods on the termination column number $p$ for \texttt{IncrePR}.

\paragraph{Termination column number} 

We run 10 Monte Carlo trials and compare the performance of the conjugate gradient (CG) method with that of the second-order trust-region (TR) method. In Table~\ref{tab:1}, we list the termination column number $p$ of \texttt{IncrePR} with restart (the three-stage Algorithm~\ref{alg:2}) at each stage, where the average, minimum and maximum termination column number $p$ (in parenthesis separated by commas) are shown. For real case, we test three groups with the number of measurements $m=2n,2.5n,3n$ respectively. For complex case, the number of measurements $m=3n,3.5n,4n$ are considered. The termination $p$ for CG is typically larger than that of TR, since the first-order method only locates a stationary point, while TR finds a local minimizer, for the nonconvex problem \eqref{eq:nonconvex}. For the challenging case of minimal measurement limit, $m=2n$ for real case and $m=3n$ for complex case, there are tests for which the termination $p>1$ at the third stage. It shows the necessity of utilizing \texttt{IncrePR} at the refinement stage to solve the convex problem. If nonconvex formulation~\eqref{eq:nonconvex} with fixed $p=1$ is used to refine the solution, it may still stagnate at a stationary point. 
\begin{table}
\setlength{\tabcolsep}{.5\tabcolsep}
\centering
\caption{Comparison of termination column number $p$ of \texttt{IncrePR} with restart (Algorithm~\ref{alg:2}) by first-/second-order method (CG/TR). The average, minimum and maximum termination $p$'s over 10 runs are shown for real and complex Gaussian models.\label{tab:1}}
\begin{minipage}[b]{.48\textwidth}
  \scalebox{0.7}{
\begin{tabular}{l@{}c@{}cc@{}c@{}cc@{}c@{}cc}
\toprule
Real &\phantom{ab}& 
\multicolumn{2}{c}{$m=2n$} & 
\phantom{ab} & 
\multicolumn{2}{c}{$m=2.5n$} &
\phantom{ab} & 
\multicolumn{2}{c}{$m=3n$}\\
  \cmidrule{3-4} \cmidrule{6-7} \cmidrule{9-10}
stage && CG & TR && CG & TR &&CG & TR\\
\midrule
I && 4.6 (4, 5) & 2.6 (2, 3) &&3.9 (3, 5) &2.2 (2, 3) &&3.3 (2, 4) &2.0 (2, 2)\\
II && 6.3 (5, 8) &2.9 (2, 3) && 3.9 (3, 5) &2.2 (2, 3) &&3.3 (2, 4) &2.0 (2, 2) \\
III && 1.1 (1, 2) &1.0 (1, 1) &&1.0 (1, 1) &1.0 (1, 1) &&1.0 (1, 1) &1.0 (1, 1)\\
\bottomrule
\end{tabular}}
\end{minipage}
\begin{minipage}[b]{.48\textwidth}
  \scalebox{0.7}{
\begin{tabular}{l@{}c@{}cc@{}c@{}cc@{}c@{}cc}
\toprule
Compl &\phantom{ab}& 
\multicolumn{2}{c}{$m=3n$} & 
\phantom{ab} & 
\multicolumn{2}{c}{$m=3.5n$} &
\phantom{ab} & 
\multicolumn{2}{c}{$m=4n$}\\
  \cmidrule{3-4} \cmidrule{6-7} \cmidrule{9-10}
stage && CG & TR && CG & TR &&CG & TR\\
\midrule
I && 2.6 (2, 3) &2.0 (2, 2) &&1.6 (1, 2) &1.8 (1, 2) &&1.5 (1, 2) &1.4 (1, 2) \\
II && 2.7 (2, 4) &2.0 (2, 2) &&1.6 (1, 2) &1.8 (1, 2) &&1.5 (1, 2) &1.4 (1, 2) \\
III && 1.4 (1, 2) &1.2 (1, 2) &&1.0 (1, 1) &1.0 (1, 1) &&1.0 (1, 1) &1.0 (1, 1) \\
\bottomrule
\end{tabular}}
\end{minipage}
\end{table}

\paragraph{Phase transition} In the following experiments, we study the phase transition of \texttt{IncrePR} with restart with comparison to other algorithms. We still use the least-squares data-fidelity~\eqref{eq:func} and apply the conjugate gradient method to solve each nonconvex optimization. We compare \texttt{IncrePR} (Algorithm~\ref{alg:2}) using random initial guess with state-of-the-art nonconvex methods, including reweighted amplitude flow (RAF)~\cite{Wang2017Solving}, truncated amplitude flow (TAF)~\cite{Wang16}, truncated Wirtinger flow (TWF)~\cite{Chen2015b} as well as Wirtinger flow (WF)~\cite{Candes2015} with the same random initial guess as \texttt{IncrePR} or a good initial guess provided by the state-of-the-art reweighted maximal correlation method~\cite{Wang2017Solving}. 
Here we only compare with nonconvex algorithms, since \texttt{IncrePR} and PhaseLift should share the same phase transition if the restart strategy is also utilized in PhaseLift. For fair comparison, we set the parameters involved in each method to their default values recommended in the literature. The maximum iteration number of the nonconvex solvers is set to $3000$ and $6000$ for real and complex cases respectively. 

For each pair $(n,m)$, we compute the recovery rate of each method by averaging 50 Monte Carlo trials. Phase transition of each method is depicted in Figure~\ref{fig:1}. \texttt{IncrePR} with restart shows the sharpest phase transition for both real and complex cases. It achieves perfect recovery from $m\geq 1.9n$ and $m\geq 2.7n$ measurement data for the real and complex cases respectively. It beats the theoretical guarantee of $m=2n-1$~\cite{Balan2006} and $m\geq 4n-4$~\cite{Balan2015} for generic real and complex cases respectively. The numerical results should not be alarming since the theoretical result is for generic measurement vectors, which include Gaussian model as a specific case. As stated in~\cite{Wang2017Solving}, the reweighted amplitude flow (\texttt{RAF}) can achieve perfect recovery for signal with dimension $n\geq 2000$ from $m=2n$ measurement data, while it may encounter difficulty for small dimension signal ($n=400$ here), as shown in Figure~\ref{fig:1}. It results from the poor initialization by the reweighted maximal correlation method, which requires large signal dimension (e.g., $n\geq 2000$). 
It is also obvious that the performance of nonconvex methods degrades significantly when starting from a random initialization. \texttt{IncrePR} with restart outperforms all the state-of-the-art nonconvex solvers markedly from random initialization. This is due to the advantage of convex PhaseLift model.
 
\begin{figure}
     \begin{subfigure}[b]{.45\textwidth}
     \centering
     \includegraphics[width=\textwidth]{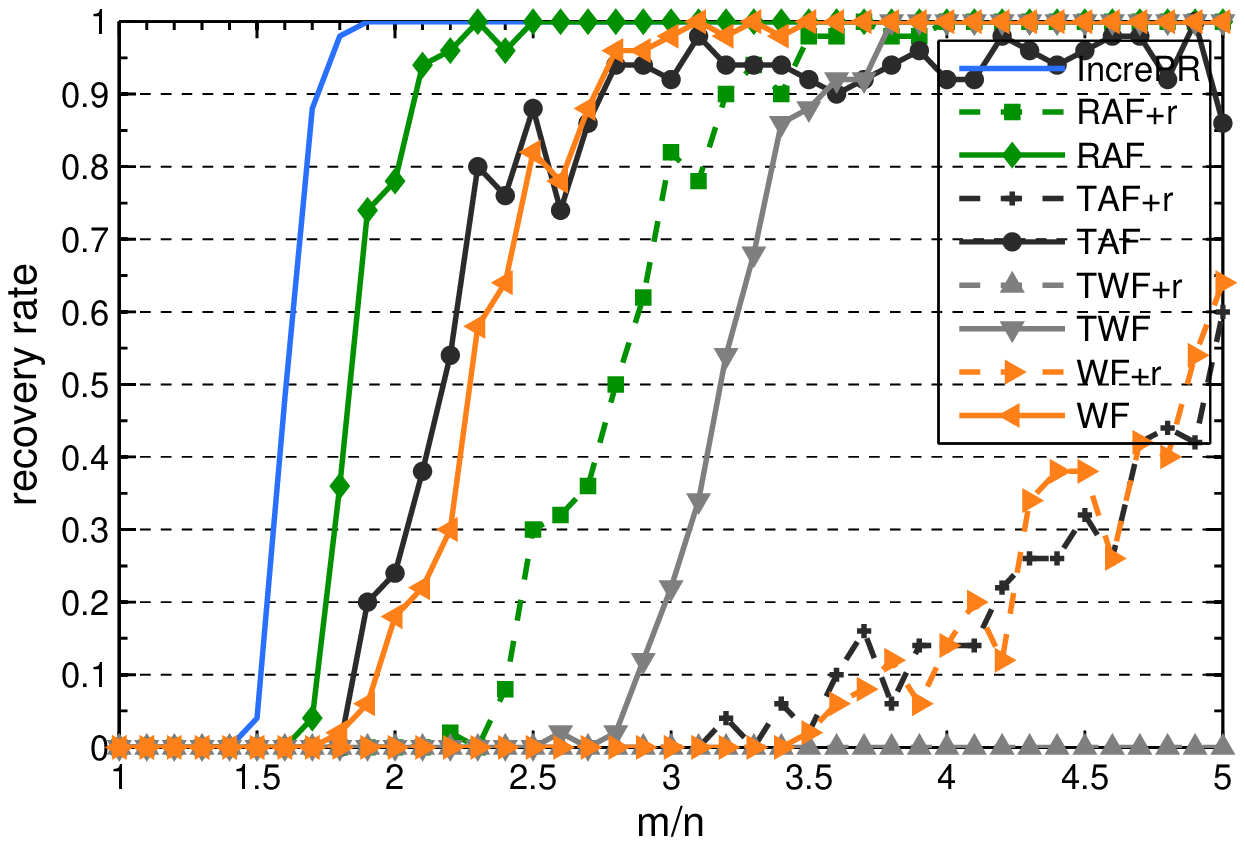}
     \caption{Standard real Gaussian model\label{fig:1a}}
   \end{subfigure}
 \begin{subfigure}[b]{.45\textwidth}
     \centering
     \includegraphics[width=\textwidth]{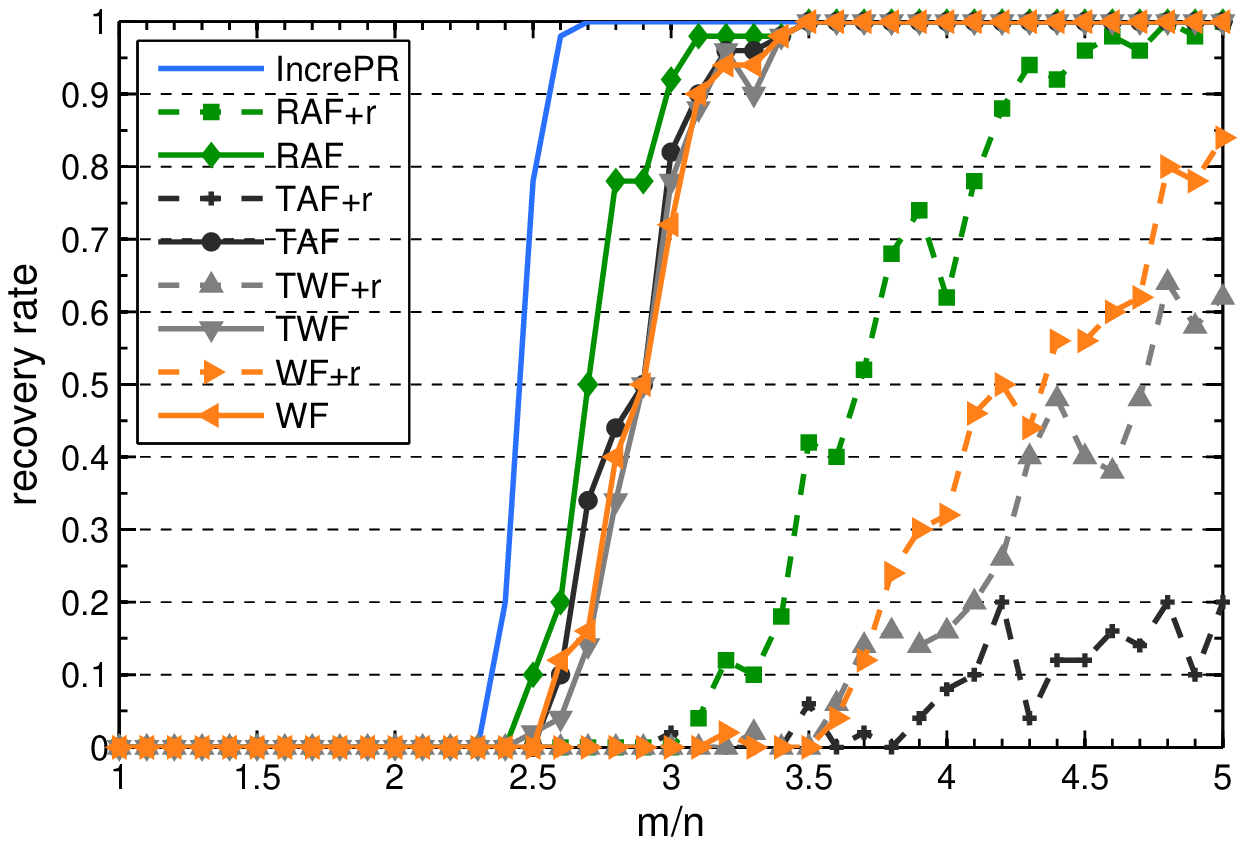}
     \caption{Standard complex Gaussian model\label{fig:1b}}
   \end{subfigure}
  \caption{Phase transition for noiseless Gaussian model. The legend with letter 'r' stands for starting from the same random initial guess as \texttt{IncrePR}.\label{fig:1}}
\end{figure}

\paragraph{Effect of trace penalty}

As mentioned before, the trace minimization in~\eqref{eq:sdp} is not necessary provided the number of measurements $m$ is on the order of $n\log n$. In this case, \texttt{IncrePR} with restart terminates at stage 1 (step 4) in Algorithm~\ref{alg:2}, i.e, solution $\bm{X}_1$ is of rank one. To demonstrate the effect of trace penalty in~\eqref{eq:phaselift} when $m$ is inbetween, we extract the rank one approximations $\bm{x}_1$ and $\bm{x}_3$ to the solutions $\bm{Y}_1$ and $\bm{Y}_3$ respectively. If $\bm{Y}_1$ is of rank one, we take $\bm{Y}_3=\bm{Y}_1$. We trace $\bm{x}_1$ after the first stage, we denote it \texttt{IncrePR-1}. We calculate the relative error between $\bm{x}_1$ ($\bm{x}_3$) and $\bm{x}^{\natural}$ to judge whether perfect recovery is obtained or not. Figure~\ref{fig:2} depicts the effect of trace regularization. The phase transition curves and the average relative errors of \texttt{IncrePR-1} and \texttt{IncrePR} are illustrated in Figure~\ref{fig:2a} and~\ref{fig:2c} respectively. The phase transition points of \texttt{IncrePR} and \texttt{IncrePR-1} are $m=1.9n$ ($m=2.7n$) and $m=2.4n$ ($m=3.6n$) for real (complex) Gaussian models respectively. 
\begin{figure}
\centering
     \begin{subfigure}[t]{.45\textwidth}
     \centering
     \includegraphics[width=\textwidth]{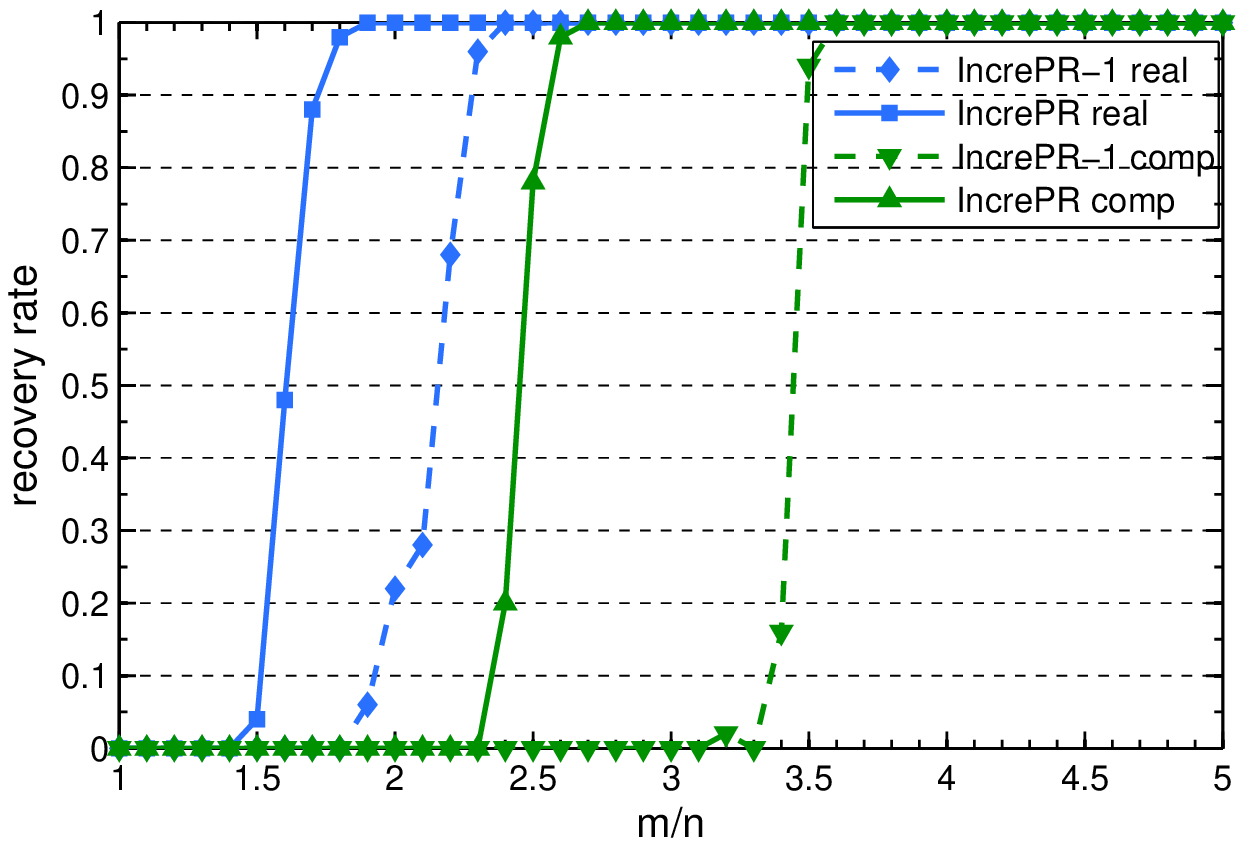}
     \caption{Recovery rate for Gaussian model\label{fig:2a}}
   \end{subfigure}
       \begin{subfigure}[t]{.45\textwidth}
     \centering
     \includegraphics[width=\textwidth]{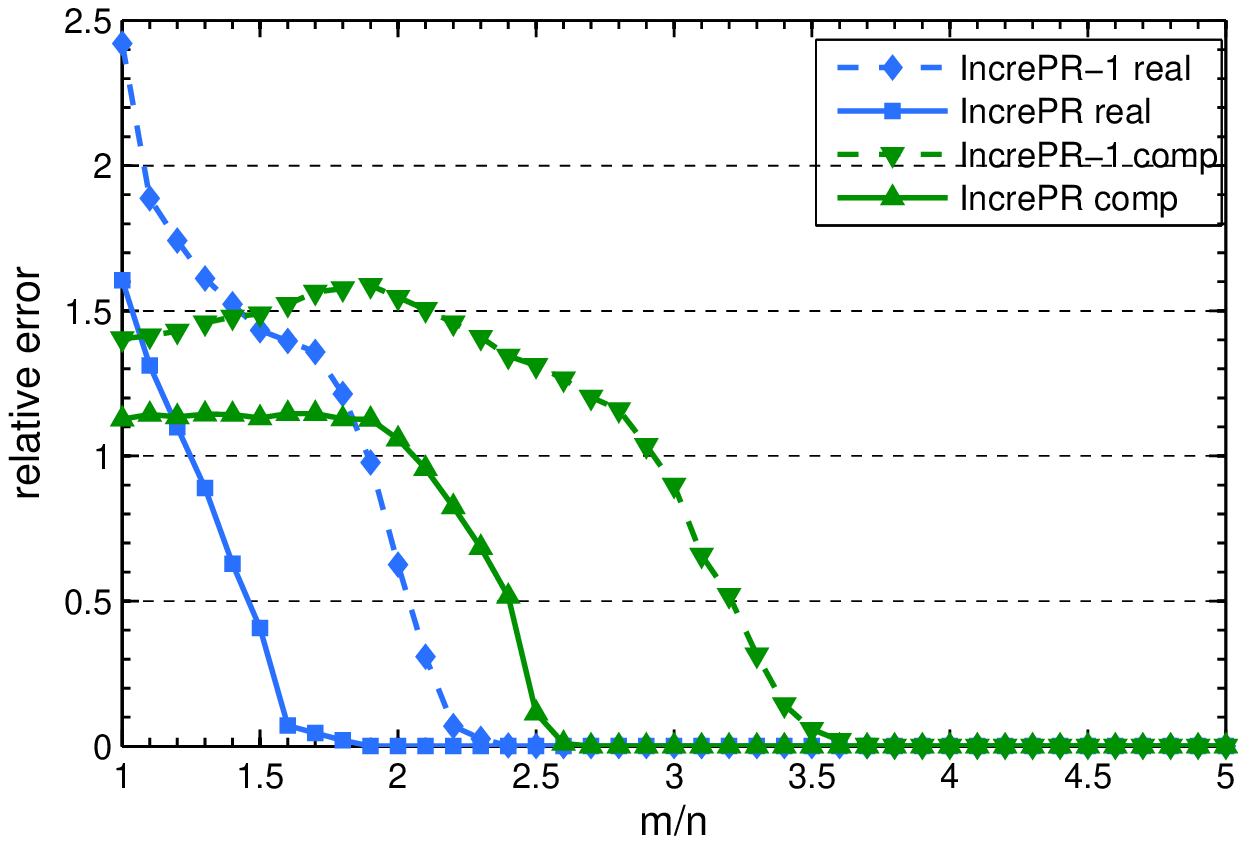}
     \caption{Average relative error for Gaussian model\label{fig:2c}}
   \end{subfigure}
\caption{The effect of trace penalty. \texttt{IncrePR-1} stands for rank one approximation to the solution from the first-stage of \texttt{IncrePR} (Algorithm~\ref{alg:2}) .\label{fig:2}}
\end{figure}

\paragraph{Noisy measurements}
We demonstrate the stability of \texttt{IncrePR} with restart for noisy Gaussian model. We only consider additive Gaussian noise for the real case. We vary the noise
level from $10$dB to $50$dB with an increment of $5$dB. We only compare \texttt{IncrePR} with the \texttt{RAF} method for its superior performance among the state-of-the-art nonconvex solvers. Figure~\ref{fig:4}
illustrates the stability of \texttt{IncrePR} with restart algorithm. For all tests, \texttt{IncrePR} with restart shows better performance, compared to \texttt{RAF}. The improved recoveries for $m=2n$ and $m=3n$ are in particular remarkable.  Due to the stability of convex PhaseLift formulation~\eqref{eq:sdp}, the capability of \texttt{IncrePR} in dealing with noisy data from minimal measurements is very promising. The poor performance for \texttt{RAF} is due to its poor initialization from noisy and limited measurement data. 
\begin{figure}
   \centering
     \includegraphics[width=.5\textwidth]{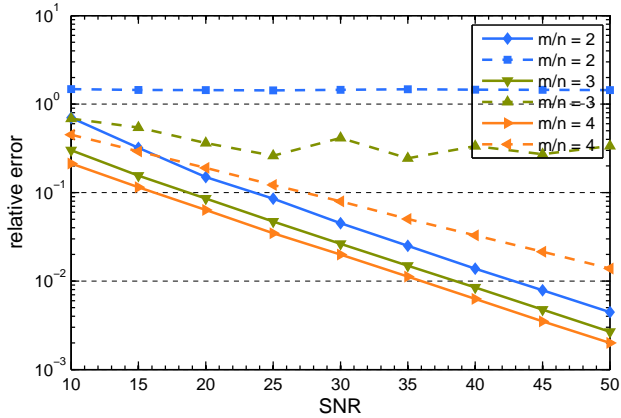}
  \caption{Average relative error for 50 trials of real noisy Gaussian model. RAF and  \texttt{IncrePR} are represented by dotted and solid line respectively.\label{fig:4}}
\end{figure}

\subsection{Real transmission measurements}

Now we test on a real example for which the measurement transmission matrix is provided by Phasepack~\cite{Chandra2017PhasePack}. The package is used to benchmark the performance of various phase retrieval algorithms. The transmission matrix dataset provides measurement matrices at three different image resolutions ($16\times 16$, $40\times 40$ and $64\times 64$). The rows of the transmission matrices are calculated using a measurement process (also a phase retrieval problem), and some are more accurate than the others. Each measurement matrix comes with a per-row residual to measure the accuracy of that row. For our test, we use the measurement matrix \verb+A_prVAMP.mat+ of image with resolution $16\times 16$. We can cut off the measurement matrix to a smaller size by only loading the most accurate rows. We obtain the measurement matrix by setting the residual constant to $0.04$. The signal to recover is a 1-dimensional signal with length $n=16\times 16=256$. 

Since we are considering a non-Gaussian model, the PhaseLift model is not tight for this example. We would not expect that the rank one approximation obtained from the PhaseLift model is the true solution except that it may be close to the global minimizer of the nonconvex problem~\eqref{eq:nonconvex} with $p=1$. We directly adopt Algorithm~\ref{alg:2} and extract the rank one approximation of the solution. At the end, we refine the rank one approximation by nonconvex optimization of~\eqref{eq:nonconvex} with $\lambda=0$ by \texttt{RAF}.  Our numerical experiments show that the local minimizer found by \texttt{IncrePR} plus refinement from a random initialization (Alg.2 + r) is better than that found by directly solving the nonconvex problem~\eqref{eq:nonconvex} with $p=1$ by \texttt{RAF} from random initialization (\texttt{RAF+r}) or from the reweighted maximal correlation initialization (\texttt{RAF}). 

\begin{table}
\setlength{\tabcolsep}{.5\tabcolsep}
\centering
\caption{The average and minimum/maximum relative errors of reconstructions over 10 runs are shown for real transmission measurements.\label{tab:2}}
\scalebox{.9}{\begin{tabular}{l@{}c@{}c@{}c@{}c@{}c@{}c@{}c@{}c}
\toprule
 &\phantom{ab}& 
$m=2n$& 
\phantom{ab} & 
$m=3n$ &
\phantom{ab} & 
$m=4n$&
\phantom{ab} & 
$m=5n$\\
\midrule
\texttt{Alg.2+r} &&0.5407 (0.5107, 0.5635) &&0.5156 (0.5054, 0.5283) &&0.5113 (0.5079, 0.5179) &&0.5008 (0.4958, 0.5055) \\
\texttt{RAF} &&0.7532  &&0.6770  &&0.5904  &&0.5524 \\
\texttt{RAF+r} &&0.8250 (0.8222, 0.8276) &&0.8251 (0.8189, 0.8272) &&0.8154 (0.7724, 0.8273) &&0.7885 (0.5491, 0.8242) \\
\bottomrule
\end{tabular}}
\end{table}

We compare Alg.2+r with \texttt{RAF+r} and \texttt{RAF} for the transmission dataset for the number of measurements $m=2n,3n,4n,5n$. For \texttt{IncrePR}, the least-squares data fidelity~\eqref{eq:func} is used and the termination $\epsilon$ is set to $100$. We run $10$ experiments for Alg.2 + r and \texttt{RAF+r} with the same random initialization. For \texttt{RAF}, the initialization is fixed by the reweighted maximal correlation method. The maximum iteration number of every inner nonconvex optimization for \texttt{IncrePR} is set to 3000, while the maximum number of iterations for \texttt{RAF+r} and \texttt{RAF} is three times as many. The average and minimum/maximum relative errors (in parenthesis separated by commas) are listed in Table~\ref{tab:2}. Relative reconstruction error is calculated by the method provided in the source code accompanying~\cite{Chandra2017PhasePack}. From Table~\ref{tab:2}, superiority of Algorithm~\ref{alg:2} and the fact that the more measurement data, the better the recovery are clear. The best reconstructions for each number of measurements over ten runs are plotted in Figure~\ref{fig:5}. The subcaption below each image is the relative reconstruction error. For $m=2n$ and $m=3n$, the reconstruction of Alg.2+r is much better. This is because \texttt{RAF} depends critically on the initialization which is sensitive to both measurement model and the number of measurements. 

\begin{figure}
\centering
     \begin{subfigure}[b]{.2\textwidth}
     \centering
     \includegraphics[width=\textwidth]{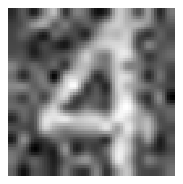}
     \caption{0.5107}
   \end{subfigure}
 \begin{subfigure}[b]{.2\textwidth}
     \centering
     \includegraphics[width=\textwidth]{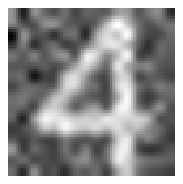}
     \caption{0.5054}
   \end{subfigure}
\begin{subfigure}[b]{.2\textwidth}
     \centering
     \includegraphics[width=\textwidth]{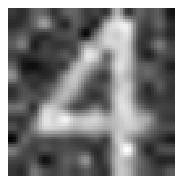}
     \caption{0.5079}
   \end{subfigure}
\begin{subfigure}[b]{.2\textwidth}
     \centering
     \includegraphics[width=\textwidth]{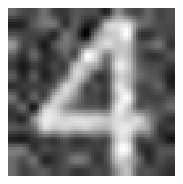}
     \caption{0.4958}
   \end{subfigure}\\
\begin{subfigure}[b]{.2\textwidth}
     \centering
     \includegraphics[width=\textwidth]{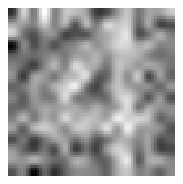}
     \caption{0.7532}
   \end{subfigure}
 \begin{subfigure}[b]{.2\textwidth}
     \centering
     \includegraphics[width=\textwidth]{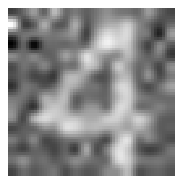}
     \caption{0.6770}
   \end{subfigure}
\begin{subfigure}[b]{.2\textwidth}
     \centering
     \includegraphics[width=\textwidth]{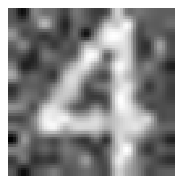}
     \caption{0.5904}
   \end{subfigure}
\begin{subfigure}[b]{.2\textwidth}
     \centering
     \includegraphics[width=\textwidth]{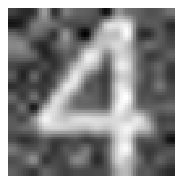}
     \caption{0.5524}
   \end{subfigure}\\
\begin{subfigure}[b]{.2\textwidth}
     \centering
     \includegraphics[width=\textwidth]{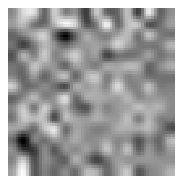}
     \caption{0.8222}
   \end{subfigure}
 \begin{subfigure}[b]{.2\textwidth}
     \centering
     \includegraphics[width=\textwidth]{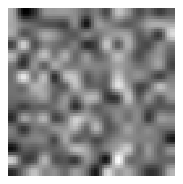}
     \caption{0.8189}
   \end{subfigure}
\begin{subfigure}[b]{.2\textwidth}
     \centering
     \includegraphics[width=\textwidth]{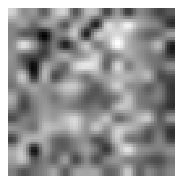}
     \caption{0.7724}
   \end{subfigure}
\begin{subfigure}[b]{.2\textwidth}
     \centering
     \includegraphics[width=\textwidth]{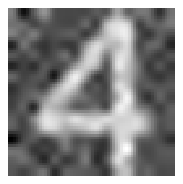}
     \caption{0.5491}
   \end{subfigure}
\caption{The best reconstructions by Algorithm~\ref{alg:2} + refinement (Alg.2+r), \texttt{RAF+r} and \texttt{RAF} over 10 runs from the number of measurements $m=2n,3n,4n,5n$. Top row are reconstructions by Alg.2+r, middle row are reconstructions by \texttt{RAF}, bottom row are reconstructions by \texttt{RAF+r}.\label{fig:5}}
\end{figure}

\subsection{Oversampling Fourier phase retrieval}
In this section, we consider the more difficult oversampling Fourier phase retrieval problem of reconstructing an image from its diffraction data. Unlike Gaussian model, where ambiguity of the solution only comes from a constant phase shift, there are three different factors that can cause ambiguity~\cite{Hayes1982}. All the state-of-the-art nonconvex solvers fail to find a even mediocre solution. Trace minimization convex problem~\eqref{eq:sdp} is no longer tight, and its direct solver also fails, as reported in~\cite{Candes2013}. Here we test the performance of \texttt{IncrePR} with restart for recovering the synthetic images of Cameraman and Barbara, all of size $128\times 128$, from oversampling data. If an image is of size $n_1\times n_2$, we pad the image with zeros to form an image of size $2n_1\times 2n_2$ and record the FFT intensities of the padded image. The data fidelity function is the Poisson maximum likelihood estimation~\eqref{eq:pfunc}, due to the inapplicability of intensity least-squares for oversampling Fourier phase retrieval~\cite{Li161}. For the noiseless Fourier phase retrieval, the measurement data determine the trace of the unknown lifting matrix, i.e., the Frobenius norm of the image. Hence, we just solve this problem by Algorithm~\ref{alg:2} (without the second step of solving~\eqref{eq:phaselift} with $\lambda=\lambda_0$) a few times. 

\begin{table}
\setlength{\tabcolsep}{.5\tabcolsep}
\centering
\caption{Relative error of \texttt{IncrePR} with ten repeats for oversampling Fourier phase retrieval.\label{tab:3}}
\scalebox{.85}{\begin{tabular}{llcccccccccc}
\toprule
repeat & $k$ & 1 & 2& 3& 4& 5& 6& 7& 8& 9& 10\\
\midrule
\multicolumn{12}{l}{Cameraman} \\
\midrule
\multirow{3}{*}{heuristic} &aver. &0.1107 &0.0671 &0.0549 &0.0476 &0.0434 &0.0405 &0.0398 &0.0378 &0.0364 &0.0361 \\
&min. &0.0944 &0.0531 &0.0436 &0.0395 &0.0380 &0.0352 &0.0356 &0.0319 &0.0309 &0.0311 \\
&max. & 0.1501 &0.0740 &0.0644 &0.0585 &0.0516 &0.0508 &0.0482 &0.0460 &0.0434 &0.0431\\
\midrule
\multirow{3}{*}{svd} &aver. &0.1126 &0.0679 &0.0553 &0.0480 &0.0449 &0.0425 &0.0403 &0.0388 &0.0379 &0.0371 \\
&min. & 0.0937 &0.0584 &0.0482 &0.0432 &0.0351 &0.0324 &0.0312 &0.0307 &0.0302 &0.0286 \\
&max. & 0.1503 &0.0739 &0.0638 &0.0542 &0.0522 &0.0475 &0.0443 &0.0437 &0.0414 &0.0436 \\
\midrule
\multicolumn{12}{l}{Barbara} \\
\midrule
\multirow{3}{*}{heuristic}&aver. & 0.0997 &0.0338 &0.0290 &0.0272 &0.0258 &0.0255 &0.0254 &0.0246 &0.0243 &0.0242 \\
&min. & 0.0465 &0.0314 &0.0259 &0.0232 &0.0214 &0.0208 &0.0199 &0.0204 &0.0204 &0.0194 \\
&max. &0.2358 &0.0381 &0.0344 &0.0307 &0.0305 &0.0282 &0.0288 &0.0285 &0.0278 &0.0274 \\
\midrule
\multirow{3}{*}{svd} &aver. &0.1012 &0.0561 &0.0328 &0.0271 &0.0255 &0.0249 &0.0240 &0.0241 &0.0236 &0.0236 \\
&min. & 0.0391 &0.0269 &0.0224 &0.0222 &0.0217 &0.0217 &0.0207 &0.0200 &0.0199 &0.0195 \\
&max. & 0.2800 &0.2753 &0.0793 &0.0369 &0.0321 &0.0308 &0.0301 &0.0308 &0.0291 &0.0274 \\
\bottomrule
\end{tabular}}
\end{table}

After obtaining the rank one approximation factor of current solution, starting from it, we can run \texttt{IncrePR} again. The number of repeat $K$ is set to $10$. The termination $\epsilon$ for each repeat is set to $0.1$. The initial elements of $\bm{Y}_0$ are integers randomly chosen from $0$ to $100$. To avoid large-scale SVD computations in our \texttt{IncrePR} algorithm to extract the optimal rank one approximation at the end of each repeat, we just choose the column with the maximum norm, which turns out to work as well as SVD approach for rank one approximation. We run the repeat procedure starting from $10$ random initial guesses for each image. The average, minimum and maximum relative errors (after rotation if necessary) over ten runs are listed in Table~\ref{tab:3}, where rank one approximation at each repeat using our heuristic way and SVD are compared. It shows that the quality of reconstruction is generally improved as the number of repeat $k$ increases. The recovered images from one run are displayed in Figure~\ref{fig:6}, where we only show results from the first three repeats of Algorithm~\ref{alg:2}. To demonstrate the failure of direct optimization for the nonconvex problem~\eqref{eq:nonconvex} with Poisson maximum likelihood estimation and fixed $p=1$, we show the results in subfigures (a) and (e) in Figure~\ref{fig:6}. To the best of our knowledge, it is the first nonconvex optimization based method that works well for the Fourier phase retrieval problem. The reconstruction quality is as good as the popular \texttt{HIO} method based on projection. Although \texttt{IncrePR} is more expensive than \texttt{HIO} in terms of computation cost, it has the flexibility of adding other regularization terms or prior information in the objective function.

\begin{figure}
\centering
\begin{subfigure}[b]{.2\textwidth}
     \centering
     \includegraphics[width=\textwidth]{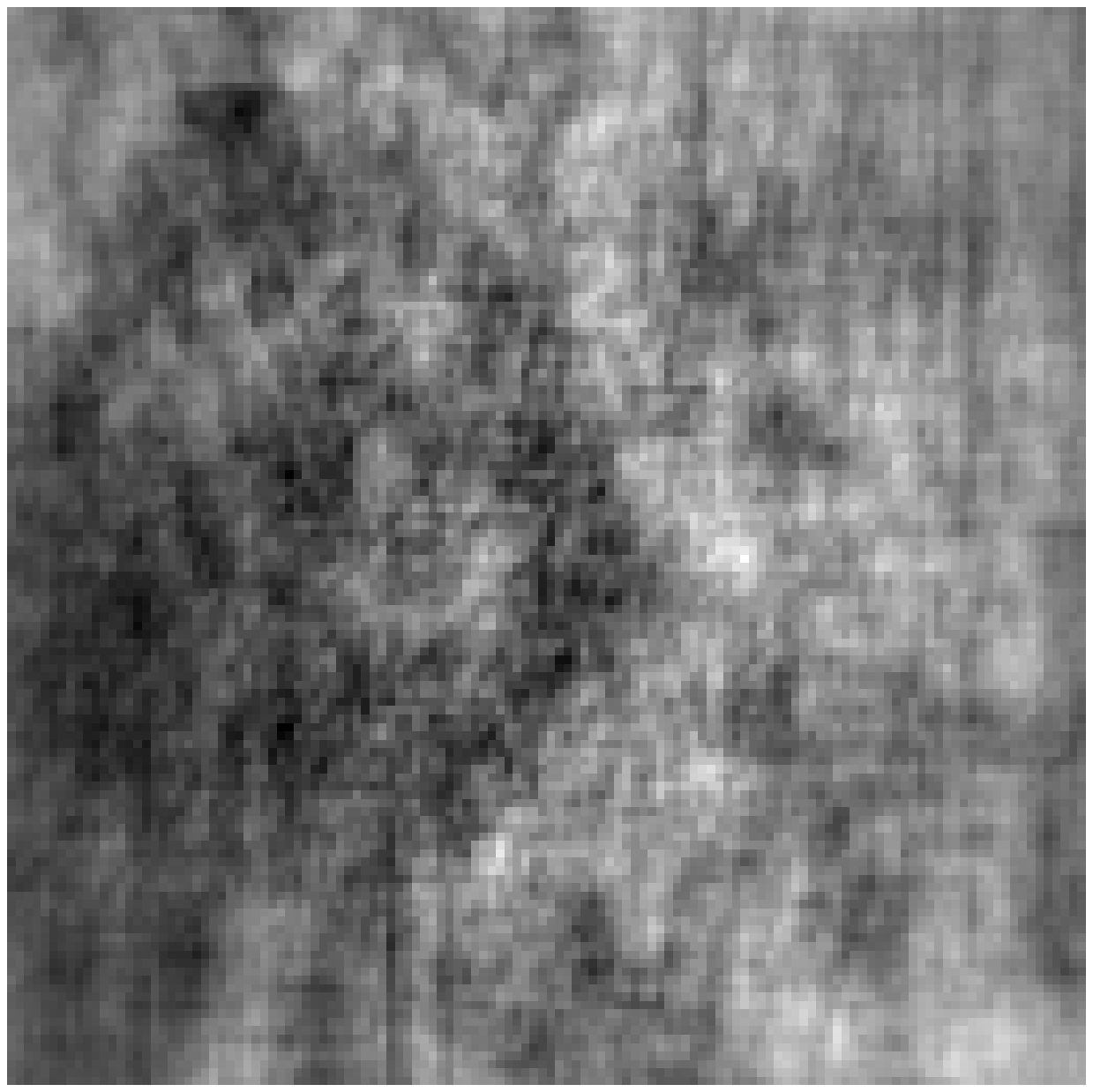}
     \caption{0.4044}
   \end{subfigure}
 \begin{subfigure}[b]{.2\textwidth}
     \centering
     \includegraphics[width=\textwidth]{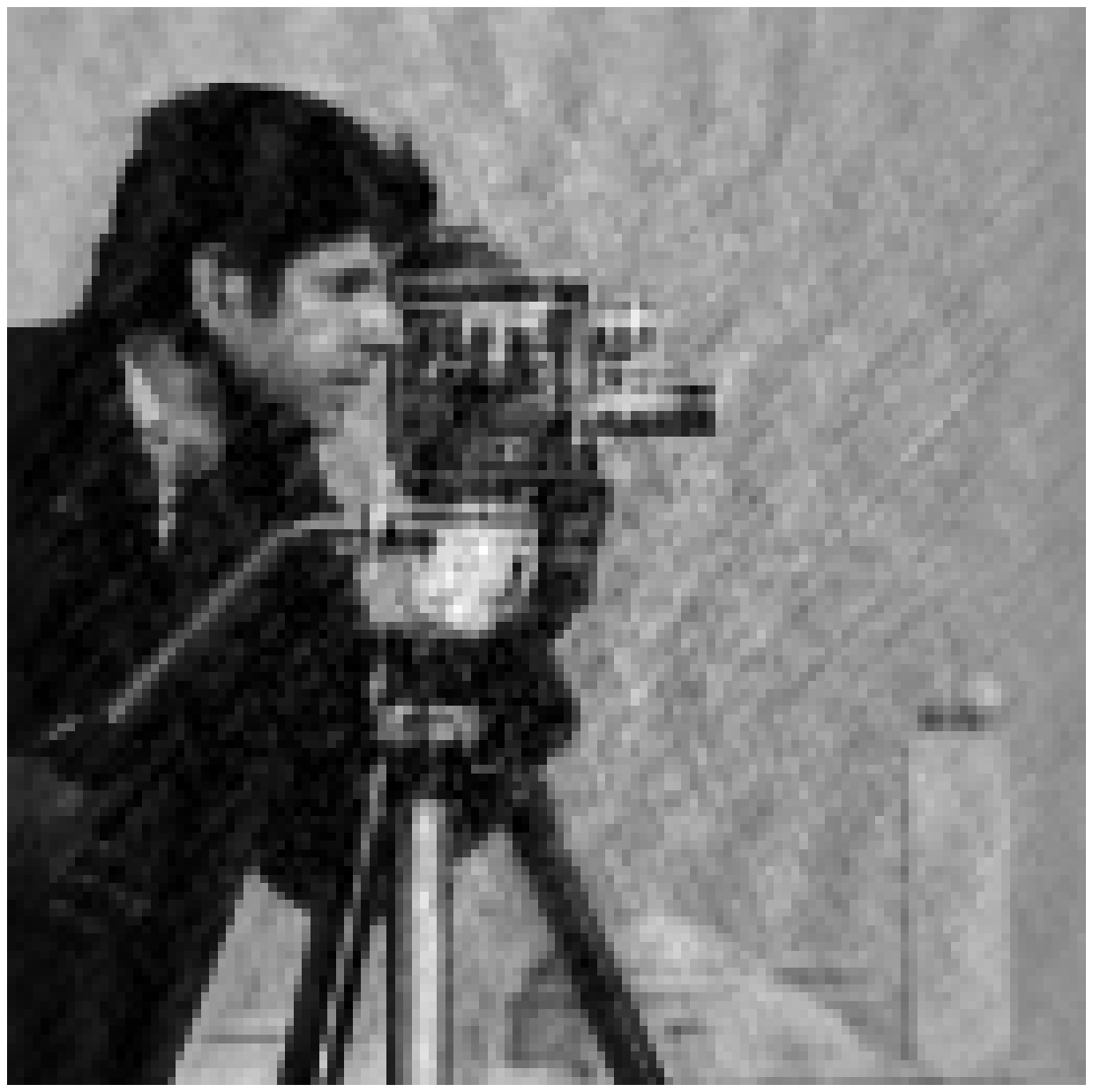}
     \caption{0.1032}
   \end{subfigure}
\begin{subfigure}[b]{.2\textwidth}
     \centering
     \includegraphics[width=\textwidth]{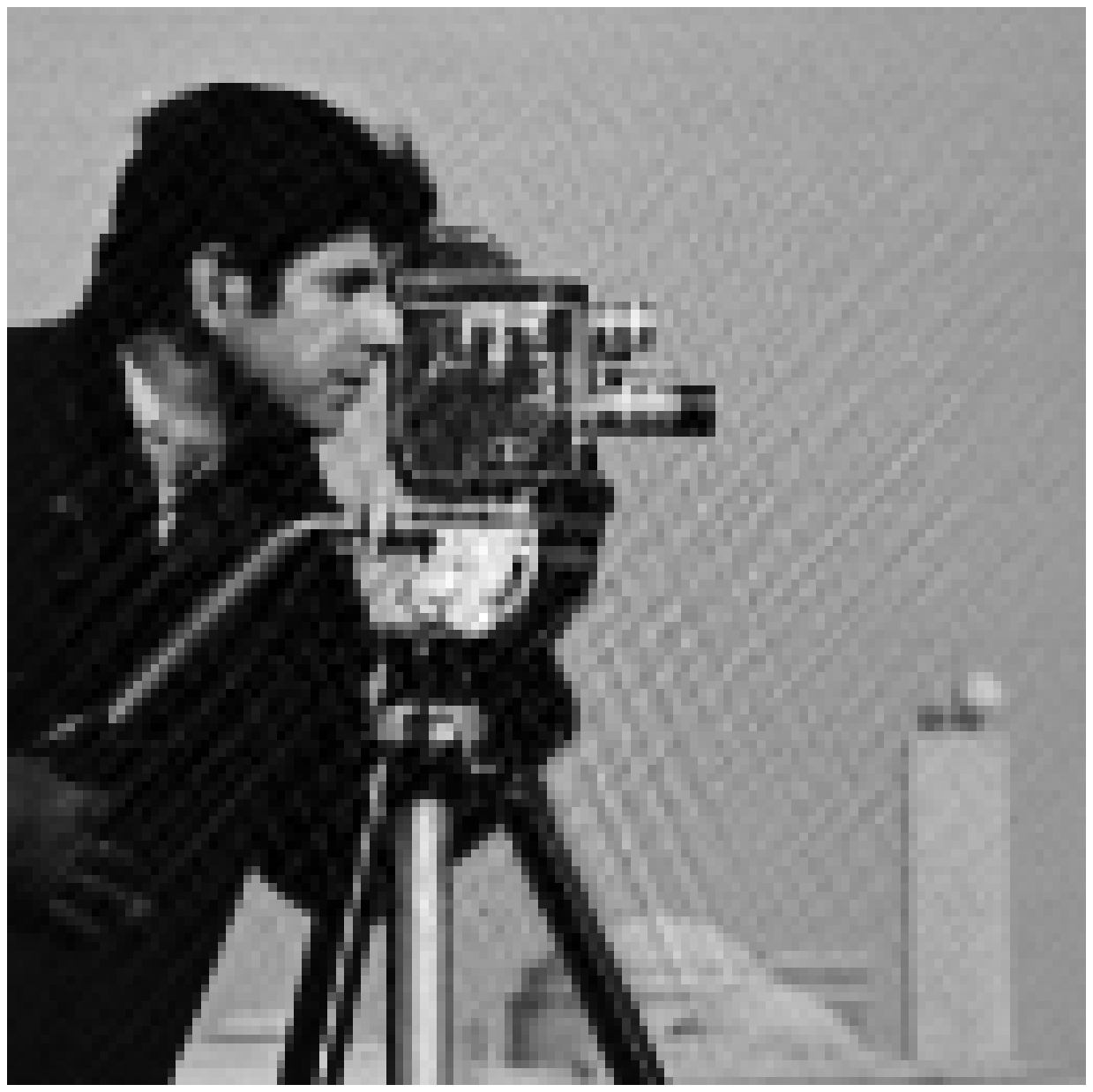}
     \caption{0.0531}
   \end{subfigure}
\begin{subfigure}[b]{.2\textwidth}
     \centering
     \includegraphics[width=\textwidth]{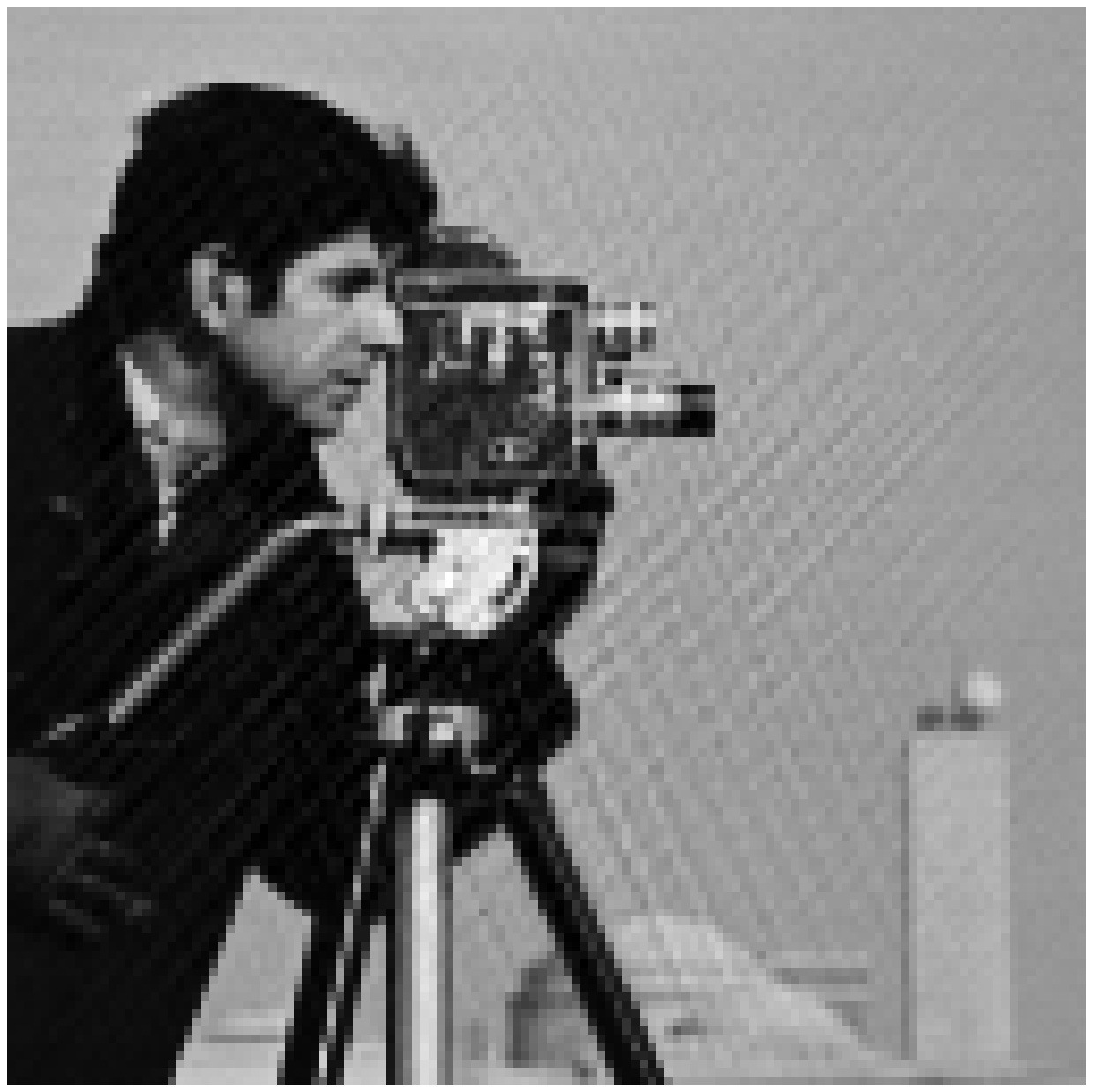}
     \caption{0.0436}
   \end{subfigure}\\
\begin{subfigure}[b]{.2\textwidth}
     \centering
     \includegraphics[width=\textwidth]{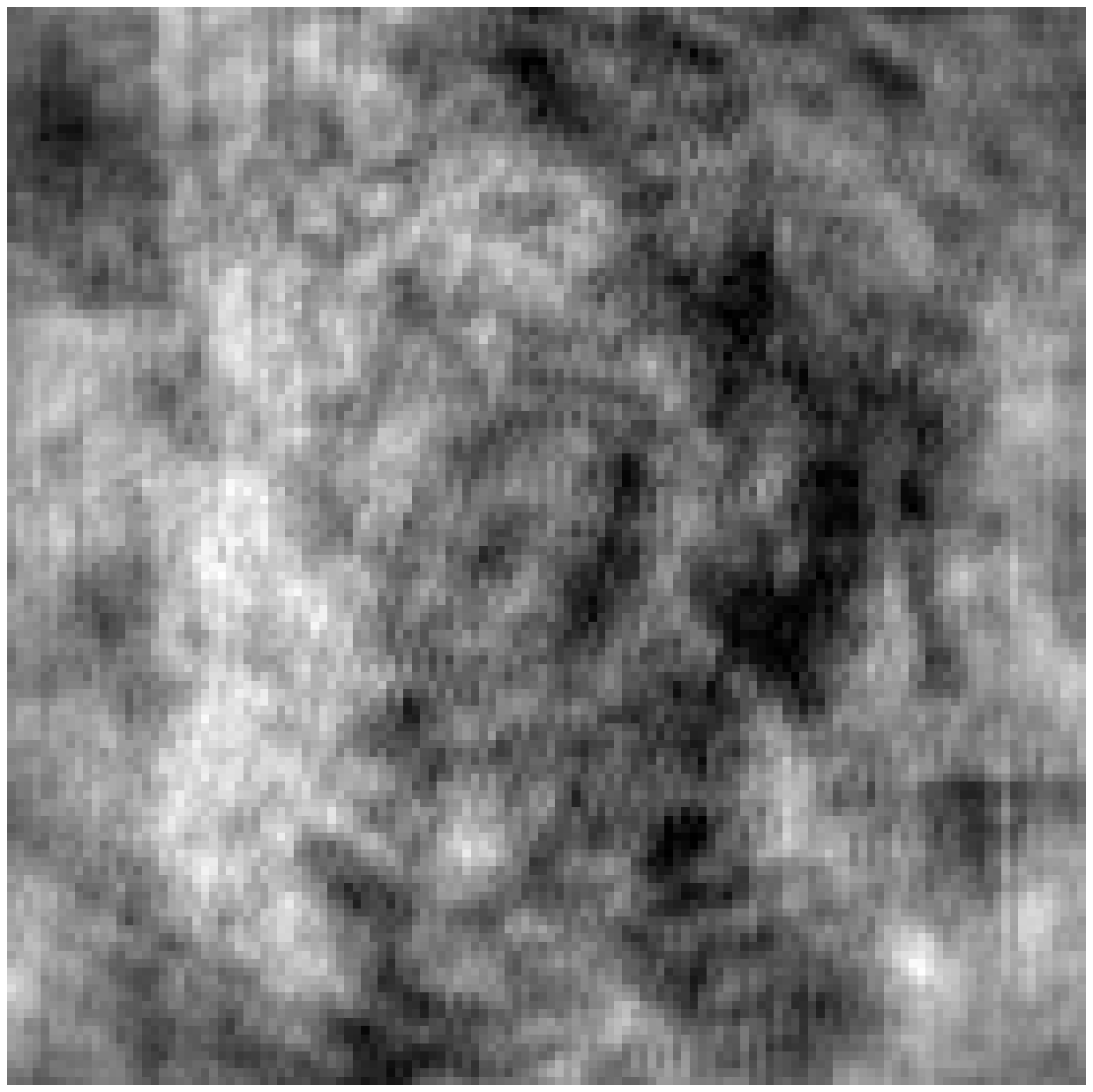}
     \caption{0.2831}
   \end{subfigure}
 \begin{subfigure}[b]{.2\textwidth}
     \centering
     \includegraphics[width=\textwidth]{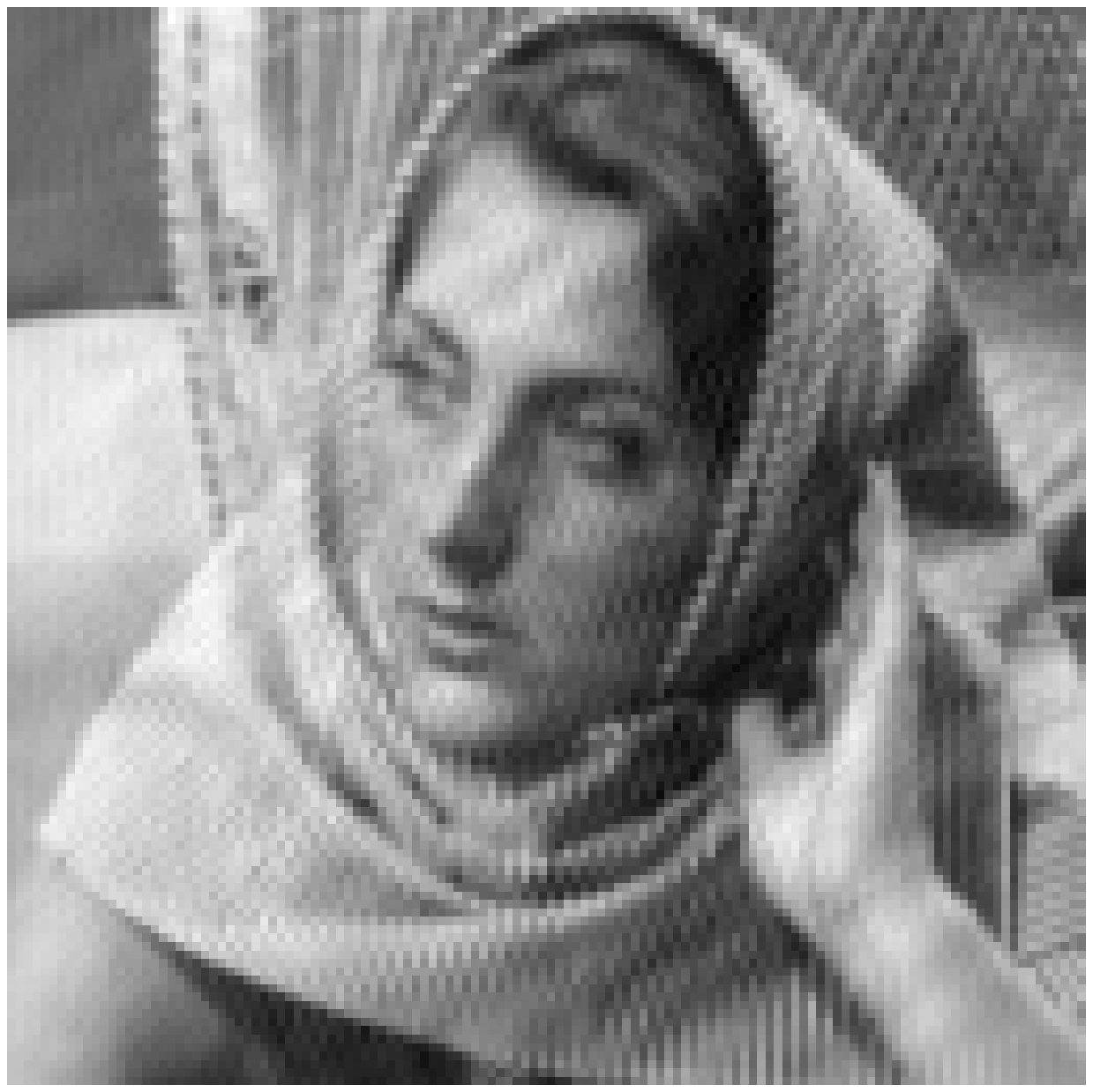}
     \caption{0.0524}
   \end{subfigure}
\begin{subfigure}[b]{.2\textwidth}
     \centering
     \includegraphics[width=\textwidth]{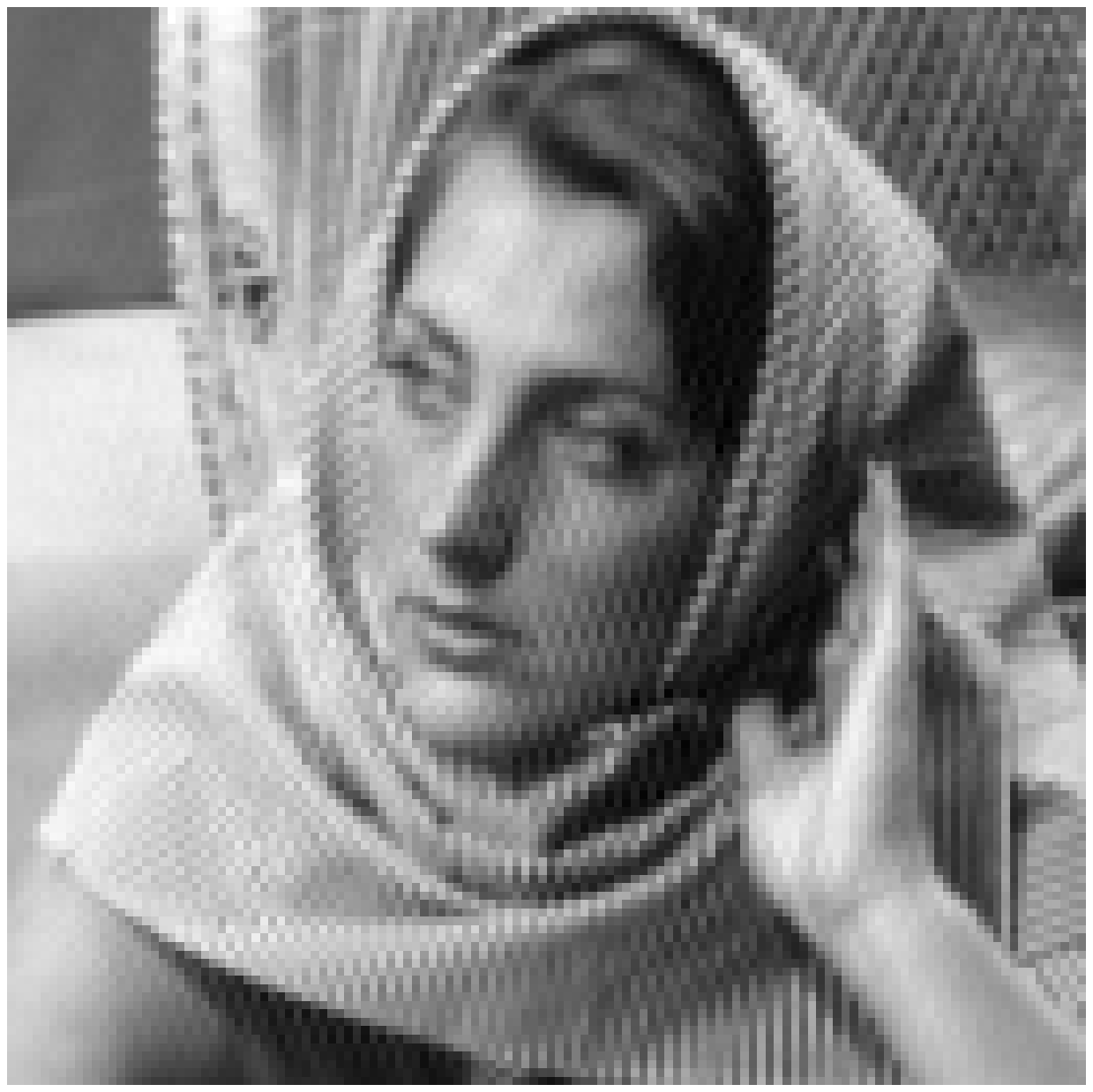}
     \caption{0.0330}
   \end{subfigure}
\begin{subfigure}[b]{.2\textwidth}
     \centering
     \includegraphics[width=\textwidth]{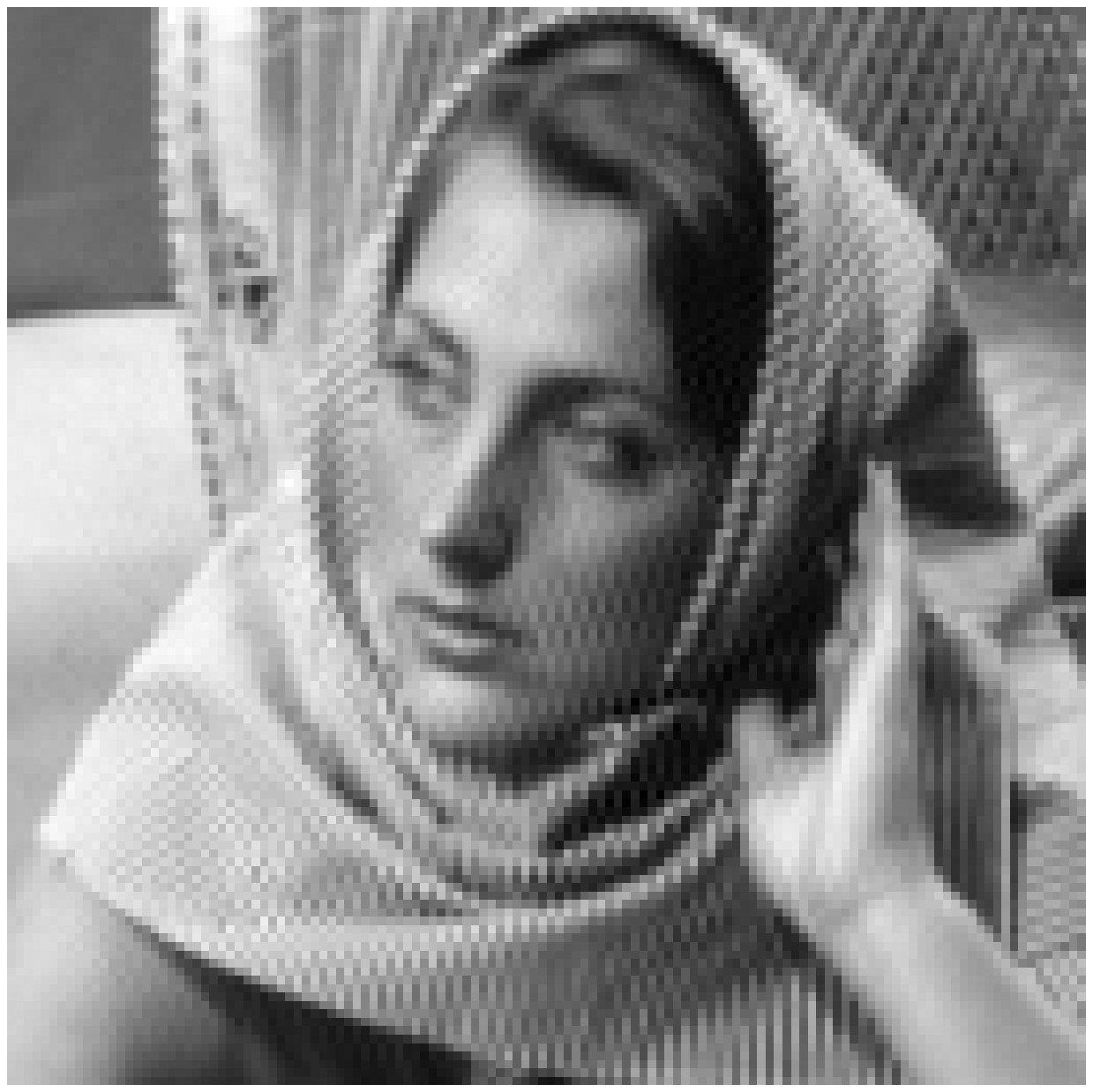}
     \caption{0.0259}
   \end{subfigure}
\caption{First row: reconstruction of Cameraman, (a) from direct nonconvex optimization of~\eqref{eq:nonconvex} with $p=1$, and (b) (c) (d) from \texttt{IncrePR} of the first three repeats. Second row: same tests for Barbara.\label{fig:6}}
\end{figure}

\section{Conclusion}
\label{sec:conclusion}
In this work, we propose an incremental nonconvex approach, \texttt{IncrePR}, to solve the phase retrieval problem based on the convex semidefinite relaxation (SDR) PhaseLift formulation. For Gaussian model, \texttt{IncrePR} with a restart strategy obtains the sharpest phase transition compared with the state-of-the-art nonconvex solvers. Since \texttt{IncrePR} solves the PhaseLift SDR problem, it avoids sensitive dependence of initialization and achieves global convergence, even when the number of measurements is close to the theoretical limit. It can escape stationary points and decrease the object function of PhaseLift by increasing the column number of the decomposition factor successively. Compared to standard  SDP solvers for PhaseLift, it reduces storage demand by matrix decomposition and improves computation efficiency by avoiding eigenvalue decomposition at each iteration. For phase retrieval for non-Gaussian models, such as transmission matrix and oversampling Fourier measurements,  although PhaseLift formulation is not tight and the optimal solution is not necessarily of rank one, one can still use \texttt{IncrePR} with restart to find a good approximation and then improve it further by a nonconvex solver, which often produces a better solution than that obtained by a direct nonconvex solver.

\section*{Acknowledgments}

JL was supported by China Postdoctoral Science Foundation grant No. 2017M620589. JFC was supported in part by Hong Kong Research Grant Council (HKRGC) grant 16306317. The work of Hongkai Zhao is partially supported by the summer visiting program at CSRC.


\end{document}